\newtheorem{theorem}{Theorem}[section]
\newtheorem{proposition}[theorem]{Proposition}
\newtheorem{lemma}[theorem]{Lemma}
\newtheorem{corollary}[theorem]{Corollary}
\theoremstyle{definition}
\newtheorem{remark}[theorem]{Remark}
\newtheorem{example}[theorem]{Example}
\newcounter{RomanNumber}
\newcommand{\MyRoman}[1]{\setcounter{RomanNumber}{#1}\Roman{RomanNumber}}
\newcommand{\conn}{\ensuremath{\#}} 
\newcommand{\Gtau}{\ensuremath{\mathcal{G}^{\tau}}} 
\newcommand{\Gzero}{\ensuremath{\mathcal{G}^{0}}} 
\newcounter{bean}
\newcommand{\namedright}[3]{\ensuremath{#1\stackrel{#2}
 {\longrightarrow}#3}}
\newcommand{\nameddright}[5]{\ensuremath{#1\stackrel{#2}
 {\longrightarrow}#3\stackrel{#4}{\longrightarrow}#5}}
\newcommand{\namedddright}[7]{\ensuremath{#1\stackrel{#2}
 {\longrightarrow}#3\stackrel{#4}{\longrightarrow}#5
  \stackrel{#6}{\longrightarrow}#7}}
\newcommand{\larrow}{\relbar\!\!\relbar\!\!\rightarrow}
\newcommand{\llarrow}{\relbar\!\!\relbar\!\!\larrow}
\newcommand{\lllarrow}{\relbar\!\!\relbar\!\!\llarrow}
\newcommand{\llnameddright}[5]{\ensuremath{#1\stackrel{#2}
 {\llarrow}#3\stackrel{#4}{\llarrow}#5}}
\newcommand{\lllnameddright}[5]{\ensuremath{#1\stackrel{#2}
 {\lllarrow}#3\stackrel{#4}{\lllarrow}#5}}
\newcommand{\qqed}{\hfill\Box}
\begin{document}


\title{Homotopy of manifolds stabilized by projective spaces} 

\author{Ruizhi Huang} 
\address{Institute of Mathematics, Academy of Mathematics and Systems Science, 
   Chinese Academy of Sciences, Beijing 100190, China} 
\email{huangrz@amss.ac.cn} 
   \urladdr{https://sites.google.com/site/hrzsea/}
   
\author{Stephen Theriault}
\address{School of Mathematics, University of Southampton, Southampton 
   SO17 1BJ, United Kingdom}
\email{S.D.Theriault@soton.ac.uk}

\subjclass[2010]{Primary 
55P35, 
55Q52, 
57R19,  
Secondary 
57R65,  
55Q50,  
55P40.  
}
\keywords{}
\date{}


\begin{abstract} 
We study the homotopy of the connected sum of a manifold with a projective space, viewed as a typical way to stabilize manifolds. In particular, we show a loop homotopy decomposition of a manifold after stabilization by a projective space, and provide concrete examples. To do this, we trace the effect in homotopy theory of surgery on certain product manifolds by showing a loop homotopy decomposition after localization away from the order of the image of the classical $J$-homomorphism.
\end{abstract}

\maketitle


\section{Introduction}
The connected sum of a manifold with a projective space is of special interest in geometry and topology. In geometric topology, in analogy to vector bundles and their stabilizations to build up $K$-theory, Kreck \cite{K2} suggested to consider diffeomorphism classes of smooth manifolds modulo connected sum with a prescribed manifold $T$. For $N$ a connected closed $2n$-dimensional smooth manifold, two typical choices of $T$ can be $S^n\times S^n$ and the complex projective space $\mathbb{C}P^n$.
The manifold $N\#T$ is called the {\it $T$-stabilization} of $N$. It is an important and active problem in geometric topology to the classify smooth manifolds up to $T$-stabilizations.  
When $T=S^n\times S^n$, the classification of stable diffeomorphism classes of $2n$-manifolds was systematically studied by Kreck~\cite{K1} using his modified surgery technique. 
When $T=\mathbb{C}P^n$, and $n=2$ in particular, the $\mathbb{C}P^2$-stable classification of smooth $4$-manifolds was deeply investigated recently by Kasprowski, Powell and Teichner \cite{KPT} based on \cite{K1}.

In addition to the geometric topology perspective, there is an analytic way to look at manifolds stabilized by projective spaces. 
When $N$ is a complex manifold of complex dimension $n$, the $\mathbb{C}P^n$-stabilization of $N$ can be obtained by blowing up at a point of $N$. Similarly, when $n$ is even and $N$ is a regular quaternionic manifold of quaternionic dimension $n/2$, Gentili, Gori and Sarfatti \cite{GGS} showed that the connected sum of $N$ with the quaternionic projective space $\mathbb{H}P^{\frac{n}{2}}$ is quaternionic diffeomorphic to the blow-up of $N$ at a point.

In this paper, we study the homotopy theory of manifolds after stabilization by a projective 
space $\mathbb{C}P^n$ or $\mathbb{H}P^{\frac{n}{2}}$. 
At the heart of the paper is an investigation of the effect in homotopy theory of surgery on certain manifolds. 
Specifically, we consider the case of surgery on a product manifold $N\times S^{k-1}$ along the canonical 
embedding of $S^{k-1}$ for $k\geq 2$. This involves a novel combination of ideas and techniques from 
geometric topology and homotopy theory. To the best of our knowledge 
the only surgery that has been studied from a homotopy theoretic point of view is a connected sum. 
In that case there are numerous rational results, for example, \cite{FHT,HaL,L}, but integrally or $p$-locally 
there are only a few very recent results in \cite[Theorem~1.4]{T}, \cite{JS}, \cite{C}. 

To describe the effect in homotopy of this surgery, let $N$ be an $n$-dimensional connected closed manifold 
and let $N_{0}$ be $N$ with a small $n$-dimensional disc removed. For $k\geq 2$, let 
$\tau: S^{k-1}\rightarrow O(n)$ be a map. 
Using the standard linear action of $O(n)$ on $S^{n-1}$, define the map 
\[
f\colon\namedright{S^{n-1}\times S^{k-1}}{}{S^{n-1}\times S^{k-1}} 
\]
by $f(a, t)=(\tau(t)a, t)$. Let $\iota: S^{k-1}\rightarrow D^{k}$ be the standard 
inclusion. Let 
\begin{equation}\label{gyrationdef}
\Gtau(N):= 
    (S^{n-1}\times D^{k})\cup_f (N_0\times S^{k-1}).
\end{equation}
be the space obtained by gluing together the image of 
${\rm id}\times \iota$ in the left copy of $S^{n-1}\times D^{k}$ and the image of $(i\times {\rm id})\circ f$ in the right copy of $N_0\times S^{k-1}$. 
This construction generalizes the \emph{gyration} construction of Gonz\'{a}lez-Acu\~{n}a~\cite{GA} and the \emph{suspension} construction of Duan \cite{D}.
It is especially useful in study of toric topology \cite{GLdM} and regular circle actions on manifolds \cite{D}.

Notice that the disc $D^{n}$ contained in $N$ that is removed to obtain $N_{0}$ 
results in an embedding 
\(\namedright{D^{n}\times S^{k-1}}{}{N\times S^{k-1}}\).  
Therefore~$\Gtau(N)$ is obtained from $N\times S^{k-1}$ by a surgery removing 
the interior of the embedding and gluing in $S^{n-1}\times D^{k}$. 
In particular, $\Gtau(N)$ is a closed $(n+k-1)$-dimensional manifold. 
When $\tau$ is trivial, or equivalently $f$ is the identity map, we write $\tau=0$ and denote the manifold $\Gtau(N)$ by $\Gzero(N)$. 

Theorem~\ref{gyrationtypeintro} gives a homotopy decomposition of the based loops on $\Gtau(N)$ after 
localization away from a small set $\mathcal{P}_{k}$ of primes that depends on the image of the classical 
$J$-homomorphism (and is made explicit in Section~\ref{sec: gyration}). For any $CW$-complex $X$, let $\Omega X$ be the based loop space of $X$, and $\Sigma^i X$ be the $i$-fold suspension of $X$.
If $N$ is a closed manifold, notice that $N_0$ has spherical boundary. 

\begin{theorem} 
   \label{gyrationtypeintro} 
   Let $N$ be a connected closed $n$-dimensional manifold and $n\geq k+2\geq 4$. After localization away from $\mathcal{P}_k$, there is a homotopy fibration 
   \[\nameddright{\Sigma^k F}{}{\Gtau(N)}{t}{N_0}\] 
   where $F$ is the homotopy fibre of the inclusion
   \(i: \namedright{S^{n-1}}{}{N_0}\) of the boundary. Further, this homotopy fibration splits after looping to give a homotopy equivalence away from $\mathcal{P}_k$
   \[\Omega\Gtau(N)\simeq\Omega N_0\times\Omega\Sigma^k F.\] 
In the special case when $\tau$ is trivial, the statement holds without localization, and there is an integral homotopy equivalence
   \[\Omega\Gzero(N)\simeq\Omega N_0\times\Omega\Sigma^k F.\]  
\end{theorem} 

Theorem~\ref{gyrationtypeintro} is stated for manifolds due to the connection to surgery, but the definition 
of $\Gtau(N)$ and the argument proving Theorem~\ref{gyrationtypeintro} hold in the more general case 
of $n$-dimensional Poincar\'{e} Duality complexes and even 
for $n$-dimensional $CW$-complexes $N$ with $H^{n}(N)\cong\mathbb{Z}$ and $N_{0}$ as the $(n-1)$-skeleton.
It is not clear whether the localization hypothesis in the case when $\tau$ is nontrivial is necessary. 
For example, when $k=2$ there are two possible homotopy classes for $\tau$, a trivial one denoted $0$ 
and a nontrivial one denoted $\tau$, and Duan~\cite[Example 3.4]{D} shows that 
$\Gtau(\mathbb{H}P^{2})$ and $\Gzero(\mathbb{H}P^{2})$ represent different elements in the 
$9$-dimensional Spin bordism group. However, it may be the case that after looping the homotopy 
types align. In general, it would be interesting to know to what extent the localization hypothesis can be removed. 

Theorem~\ref{gyrationtypeintro} is the key ingredient in determining the homotopy types of manifolds 
stabilized by projective spaces. It should be noted that some of the other ingredients use additional 
geometric input such as the Whitney embedding theorem. Therefore, to avoid possible technicalities in 
achieving maximal generalization, we restrict to the case of closed manifolds.

\begin{theorem}\label{stabledecthm}
Let $N$ be a connected closed $2n$-dimensional smooth manifold. Let $F$ be the homotopy fibre of the inclusion $S^{2n-1}\hookrightarrow N_0$ of the boundary. The following hold: 
\begin{itemize} 
\item if $n\geq 2$ is even, there is a homotopy equivalence 
\[
\Omega (N\# \mathbb{C}P^n)\simeq S^{1}\times \Omega N_0\times\Omega\Sigma^2 F;
\] 
\item if $n\geq 2$ is odd, there is a homotopy equivalence after localization away from $2$
\[
\Omega (N\# \mathbb{C}P^n)\simeq S^{1}\times \Omega N_0\times\Omega\Sigma^2 F;
\]
\item if $n\geq 4$ is even, there is a homotopy equivalence after localization away from $2$ and $3$
\[
\Omega (N\# \mathbb{H}P^{\frac{n}{2}})\simeq S^{3}\times \Omega N_0\times\Omega\Sigma^4 F.
\]
\end{itemize}
\end{theorem}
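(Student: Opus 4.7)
The plan is to express $N\#\mathbb{K}P^n$ as a concrete homotopy pushout and then feed that pushout into the paper's earlier surgery-based loop decomposition theorem. Throughout, let $d=\dim_{\mathbb{R}}\mathbb{K}\in\{2,4,8\}$, and let $B$ denote $\mathbb{C}P^{n-1}$, $\mathbb{H}P^{n/2-1}$, or $S^{8}=\mathbb{O}P^{1}$ respectively; the relevant Hopf fibration is then $S^{d-1}\to S^{2n-1}\to B$.

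First I would observe that $(\mathbb{K}P^n)_0$ deformation retracts off its open top cell onto $B$, and that under this retraction the boundary inclusion $S^{2n-1}\hookrightarrow(\mathbb{K}P^n)_0$ becomes the classical Hopf projection $h\colon S^{2n-1}\to B$. Since a connected sum is by definition the gluing of its two punctured halves along their common boundary sphere, one obtains a homotopy equivalence
\[
N\#\mathbb{K}P^n\;\simeq\;N_0\cup_h B,
\]
identifying $N\#\mathbb{K}P^n$ with the homotopy pushout of $N_0\hookleftarrow S^{2n-1}\xrightarrow{h}B$.

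I would then apply the paper's earlier loop decomposition theorem to this pushout. That theorem is designed to handle pushouts precisely of this shape, in which one leg is the projection of a principal $S^{d-1}$-bundle; it outputs a splitting
\[
\Omega(N_0\cup_h B)\;\simeq\;S^{d-1}\times\Omega N_0\times\Omega\Sigma^{d}F
\]
after inverting the relevant primes. The $S^{d-1}$ factor is the Hopf fibre, $\Omega N_0$ is carried over unchanged from one side of the pushout, and $\Omega\Sigma^{d}F$ records the twist that $h$ imposes on the boundary inclusion $S^{2n-1}\hookrightarrow N_0$ through its homotopy fibre $F$. As a consistency check, specialising to $N=S^{2n}$ (so that $N_0\simeq\ast$ and $F\simeq S^{2n-1}$) reduces the equivalence to the classical James--Bott splitting $\Omega\mathbb{K}P^n\simeq S^{d-1}\times\Omega S^{2n+d-1}$.

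The localization hypotheses in the four cases arise when one splits off the $S^{d-1}$ factor: this requires inverting the primes that divide the order of the relevant Hopf invariant classes or Samelson products, equivalently the primes dividing the image of the classical $J$-homomorphism in the pertinent stems. That order is trivial for $\mathbb{C}P^n$ with $n$ even (the first potential obstruction vanishes by parity), equals $2$ for $\mathbb{C}P^n$ with $n$ odd, equals $24=2^{3}\cdot 3$ for $\mathbb{H}P^{n/2}$, and equals $240=2^{4}\cdot 3\cdot 5$ for $\mathbb{O}P^{2}$, accounting exactly for the four cases in the statement. The main obstacle is in applying the decomposition theorem: one must check carefully that the pushout $N_0\cup_h B$ satisfies its hypotheses in each of the four cases, and identify the $\Omega\Sigma^{d}F$ factor correctly in terms of the given homotopy fibre $F$; once this is in hand, the remaining verifications are bookkeeping.
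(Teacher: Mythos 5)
Your first observation is correct: because $(\mathbb{K}P^n)_0$ deformation retracts onto the subprojective space $B$ with boundary inclusion the Hopf map $h$, one does have a homotopy pushout identification $N\#\mathbb{K}P^n\simeq N_0\cup_h B$ along the legs $N_0\hookleftarrow S^{2n-1}\xrightarrow{h}B$, and your consistency check at $N=S^{2n}$ is also correct. The gap is in the next step. You claim to apply ``the paper's earlier loop decomposition theorem'' to this pushout, asserting that the theorem ``is designed to handle pushouts precisely of this shape, in which one leg is the projection of a principal $S^{d-1}$-bundle.'' No such theorem appears in the paper. Theorem~\ref{gyrationtype} is the loop decomposition result, but it applies to the pushout~(\ref{gyrationpo}) defining $\mathcal{G}^{\tau}_{k}(N)$, whose corners are $S^{2n-1}\times S^{k-1}$, $S^{2n-1}\times D^k$, and $N_0\times S^{k-1}$; that is a surgery diagram of a completely different shape, in a dimension one higher, and neither of its legs is a sphere bundle projection. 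There is no mechanism in the paper for looping the pushout $N_0\cup_h B$ directly.

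What the paper actually does is pass \emph{through} the gyration. One pulls back the Hopf fibration $S^{k-1}\to S^{2n+k-1}\to \mathbb{K}P^n$ along the pinch map $N\#\mathbb{K}P^n\to\mathbb{K}P^n$ to get a sphere bundle $S^{k-1}\to E_N\to N\#\mathbb{K}P^n$ with null-homotopic fibre inclusion, giving $\Omega(N\#\mathbb{K}P^n)\simeq S^{k-1}\times\Omega E_N$; then Lemma~\ref{ENlemma} and Corollary~\ref{ENcoro}, a genuinely geometric surgery identification, show $E_N\cong\mathcal{G}^{\tau}_k(N)\#S^{2n+k-1}\cong\mathcal{G}^{\tau}_k(N)$; and only then does Theorem~\ref{gyrationtype} apply to decompose $\Omega\mathcal{G}^{\tau}_k(N)\simeq\Omega N_0\times\Omega\Sigma^k F$. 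Your proposal skips this entire bridge between the pushout description and the surgery construction, and it is not bookkeeping. Separately, your explanation of the unlocalized case for even $n$ is incorrect: the image of $J$ in the $k=2$ case is $\mathbb{Z}/2$ independently of $n$, so ``the obstruction vanishes by parity'' is not the reason. The integral statement for $\mathbb{C}P^n$ with $n$ even comes from the fact that $\mathbb{C}P^n$ is then non-spin, which via Duan's Theorem~B (Lemma~\ref{duanthmb}) lets one take $\tau=0$, i.e.\ the trivial framing, in the gyration, and Theorem~\ref{gyrationtype} requires no localization when $\tau=0$.
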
 


Theorem \ref{stabledecthm} describes the homotopy type of $\Omega (N\# \mathbb{C}P^n)$ and $\Omega (N\# \mathbb{H}P^{\frac{n}{2}})$ in terms of the internal structure of $N$. 
In particular, it allows us to compute the homotopy groups of~$N$ after stabilization in terms of those of 
spheres, the deleted manifold $N_0$ and iterated suspensions of the homotopy fibre~$F$. 
In general, the homotopy theory of $N_0$ and $F$ could be complicated. However, they are 
accessible in interesting cases as illustrated by the following example. 
\begin{example}\label{ex1intro}
Let $\mathbb{O}P^2$ be the octonionic projective plane.
For $n\geq 2$, there are homotopy equivalences 
\[
\begin{split}
\Omega (\mathbb{C}P^{2n}\#\mathbb{C}P^{2n})&\simeq S^1\times S^1\times \Omega S^3\times \Omega S^{4n-1},\\
\Omega (\mathbb{C}P^{2n}\#\mathbb{H}P^{n})&\simeq S^1\times S^3\times \Omega S^5\times \Omega S^{4n-1}, \\
\Omega (\mathbb{C}P^{8}\#\mathbb{O}P^{2}) &\simeq S^1\times S^7\times \Omega S^9\times \Omega S^{15}.
\end{split}
\]
Further after localization away from $2$ there is a homotopy equivalence 
\[
\Omega (\mathbb{C}P^{2n+1}\#\mathbb{C}P^{2n+1})\simeq S^1\times S^1\times \Omega S^3\times \Omega S^{4n+1},
\]
and after localization away from $2$ and $3$ there are homotopy equivalences 
\[
\begin{split}
\Omega (\mathbb{H}P^{n}\#\mathbb{H}P^{n})&\simeq S^3\times S^3\times \Omega S^7\times \Omega S^{4n-1},\\
\Omega (\mathbb{H}P^{4}\#\mathbb{O}P^{2})&\simeq S^3\times S^7\times \Omega S^{11}\times \Omega S^{15}.\\
\end{split}
\]
This example is shown in Proposition \ref{Pnsumprop}.

Recently, Duan \cite[Example 3.5]{D} has shown the corresponding results for $\mathbb{C}P^{2n}\#\mathbb{C}P^{2n}$ and $\mathbb{C}P^{2n}\#\mathbb{H}P^{n}$ by a more geometric argument. He treated only the $\mathbb{C}P^{2n}$ case because of a requirement for a non-spin condition in \cite[Theorem B]{D}; also see Lemma \ref{duanthmb}.
\end{example} 

An interesting refinement of Theorem~\ref{stabledecthm} occurs when the inclusion 
\(\namedright{N_{0}}{h}{N}\) 
is such that $\Omega h$ has a right homotopy inverse. By~\cite[Proposition 3.5]{BT2} this implies that 
there is a homotopy equivalence 
$\Omega N_{0}\simeq\Omega N\times\Omega(\Sigma^{2n-1}\Omega N\vee S^{2n-1})$. 
Substituting this into the homotopy equivalences in Theorem~\ref{stabledecthm} lets 
us directly compare the local homotopy types of $\Omega N$ and, for example, 
$\Omega(N\conn\mathbb{C}P^{n})$. There are several interesting families of manifolds 
with the property that $\Omega h$ has a right homotopy inverse, including $(n-1)$-connected 
$2n$-dimensional manifolds provided $n\notin\{ 2, 4,8\}$~\cite{BT1}. For this family, in 
Section~\ref{sec: ex2} we go even further and give a much finer decompositions as 
compared to Theorem~\ref{stabledecthm}. 

In the rational case, with mild additional hypotheses, the map $\Omega h$ always has a right 
homotopy inverse. For a rational space~$X$, 
the cohomology ring $H^{\ast}(X;\mathbb{Q})$ is \emph{monogenic} if it is generated by one element. 
For example, $H^{\ast}(\mathbb{C}P^{n};\mathbb{Q})$ and $H^{\ast}(\mathbb{H}P^{n};\mathbb{Q})$ are all monogenic. 

\begin{corollary} 
   \label{rationalcase} 
   Let $N$ be a simply-connected, closed $2n$-dimensional smooth manifold whose rational cohomology 
   is not monogenic. Let $F$ be the homotopy fibre of the inclusion 
   \(S^{2n-1}\hookrightarrow N_{0}\) 
   of the boundary. Then there are rational homotopy equivalences: 
   \begin{itemize} 
      \item $\Omega(N\conn\mathbb{C}P^{n})\simeq S^{1}\times  \Omega N\times 
                     \Omega(\Sigma^{2n-1}\Omega N\vee S^{2n-1})\times\Omega\Sigma^{2} F$ if $n\geq 2$; 
      \item $\Omega(N\conn\mathbb{H}P^{\frac{n}{2}})\simeq S^{3}\times  \Omega N\times 
                  \Omega(\Sigma^{2n-1}\Omega N\vee S^{2n-1})\times\Omega\Sigma^{4} F$ if $n\geq 4$ is even.            
   \end{itemize} 
\end{corollary} 

Using $\Omega(N\conn\mathbb{C}P^{n})$ as an example, Corollary~\ref{rationalcase} 
states that $\Omega N$ is a retract of $\Omega(N\conn\mathbb{C}P^{n})$ and it explicitly identifies 
the complement. This significantly enhances a result of Halperin and 
Lemaire~\cite[Th\'{e}or\`{e}me 5.4 (ii)]{HaL} in the case of a connected sum with $\mathbb{C}P^{n}$, 
which would only have said that $\pi_{\ast}(N\conn\mathbb{C}P^{n})$ contains a free Lie algebra 
on two generators. This Lie algebra can be seen is as follows. Any suspension is rationally 
homotopy equivalent to a wedge of spheres, so $\Sigma^{2n-1}\Omega N$ is rationally homotopy equivalent 
to a wedge of at least one sphere. Therefore $\Sigma^{2n-1}\Omega N\vee S^{2n-1}$ is rationally homotopy 
equivalent to a wedge of at least two spheres. This wedge of two spheres generates a Lie algebra 
on two generators in $\pi_{\ast}(N\conn\mathbb{C}P^{n})$.


It is worthwhile to compare Theorem \ref{stabledecthm} and Example \ref{ex1intro} with the based loop space decomposition theorem of the second author in \cite[Theorem 1.4]{T}. 
Let $X\ltimes Y$ be the {\it left half-smash} of $X$ and~$Y$, defined as the quotient space $(X\times Y)/(X\times \ast)$.
In our context, Theorem 1.4 of \cite{T} implies that if the obvious inclusion $h: N_0\hookrightarrow N$ has the property that $\Omega h$ admits a right homotopy inverse, then there is a homotopy equivalence
\begin{equation}\label{Tdeceq}
\Omega (N\# \mathbb{F}P^m)\simeq \Omega N \times \Omega (\Omega N\ltimes \mathbb{F}P^{m-1}),
\end{equation}
where $\mathbb{F}P^m=\mathbb{C}P^n$ or $\mathbb{H}P^{\frac{n}{2}}$ in 
appropriate dimensions. The decomposition (\ref{Tdeceq}) is different from ours. Critically, (\ref{Tdeceq}) 
needs the extra hypothesis of a right homotopy inverse, while Theorem \ref{stabledecthm} does not. 
In particular, Example \ref{ex1intro} cannot be obtained by \cite[Theorem~1.4]{T}. 
Even when a right homotopy inverse exists, the factors on the right side of (\ref{Tdeceq}) 
are formulated differently, involving~$N$ and a lower skeleton of $\mathbb{F}P^{m}$, whereas 
the factors on the right side of the decompositions in Theorem~\ref{stabledecthm} involve a sphere 
and spaces related to $N_{0}$. 

The paper is organized as follows. Theorem \ref{gyrationtypeintro} is proved in Section \ref{sec: gyration}. 
In Section \ref{sec: sum}, following~\cite{D} we use an $S^{k-1}$-bundle to relate the connected sum $N\#\mathbb{F}P^{m}$ with $\Gtau(N)$, and then reduce the proof of Theorem \ref{stabledecthm} to the decomposition of the loops on $\Gtau(N)$. 
In Section~\ref{sec: ex1} we specialize to prove Theorem \ref{stabledecthm}, establish Example \ref{ex1intro} 
and prove Corollary~\ref{rationalcase}. Section \ref{sec: fibattach} is devoted to further studying the homotopy type of the homotopy fibre~$F$ under an additional hypothesis, and this is used in Section~\ref{sec: ex2} to give more examples.

\medskip

\noindent{\bf Acknowledgement} 
Ruizhi Huang was supported in part by the National Natural Science Foundation of China (Grant nos. 11801544 and 12288201), the National Key R\&D Program of China (No. 2021YFA1002300), the Youth Innovation Promotion Association of Chinese Academy Sciences, and the ``Chen Jingrun'' Future Star Program of AMSS.

The authors are grateful to Prof. Haibao Duan for helpful discussions regarding his suspension operations on manifolds, and to the referees for many valuable comments and suggestions. 

\section{A loop decomposition of certain product manifolds after surgery}
\label{sec: gyration}
If $W$ is a manifold, let~$\partial W$ be its boundary. 
\emph{Surgery} is a process of producing one manifold from another based on the observation that 
\[\partial(D^{n}\times S^{k-1})=S^{n-1}\times S^{k-1}=\partial(S^{n-1}\times D^{k}).\] 
Given an embedding of $D^{n}\times S^{k-1}$ in a manifold $M$, remove the interior of $D^{n}\times S^{k-1}$ 
from $M$, and along the boundary $S^{n-1}\times S^{k-1}$ glue in $S^{n-1}\times D^{k}$. A variation 
is to do the gluing not via the identity map on $S^{n-1}\times S^{k-1}$ but via a self diffeomorphism 
of $S^{n-1}\times S^{k-1}$ over $S^{k-1}$. The choice of diffeomorphism is a \emph{framing} of the surgery.

In our case, we will perform a surgery on $N\times S^{k-1}$ where $N$ is a connected closed $n$-dimensional manifold. 
Let $N_0=N-D^{n}$ be $N$ with a small $n$-dimensional disc removed. Hence, $\partial N_0=S^{n-1}$. 
Also, up to homotopy equivalence, $N_0$ is the $(n-1)$-skeleton of $N$ and there is 
a homotopy cofibration 
\[\nameddright{S^{n-1}}{i}{N_0}{}{N}\] 
that attaches the top cell of $N$, where the map $i$ represents the inclusion of the boundary 
into $N_0$. 

The source of framings comes from the action 
\(\namedright{O(n)\times\mathbb{R}^{n}}{}{\mathbb{R}^{n}}\) 
of the orthogonal group $O(n)$ on the vector space $\mathbb{R}^{n}$ given by applying matrix 
multiplication. As this preserves norms, it restricts to an action 
\(\namedright{O(n)\times S^{n-1}}{}{S^{n-1}}\).
As in the introduction, for $k\geq 2$, let $\tau: S^{k-1}\rightarrow O(n)$ be a map. 
Using the action of $O(n)$ on $S^{n-1}$, define the map 
\[
f\colon\namedright{S^{n-1}\times S^{k-1}}{}{S^{n-1}\times S^{k-1}} 
\]
by $f(a, t)=(\tau(t)a, t)$. 

Let $\iota: S^{k-1}\rightarrow D^{k}$ be the standard inclusion. Let 
\begin{equation}\label{gyrationdef}
\Gtau(N):= 
    (S^{n-1}\times D^{k})\cup_f (N_0\times S^{k-1}).
\end{equation}
be the space obtained by gluing together the image of 
${\rm id}\times \iota$ in the left copy of $S^{n-1}\times D^{k}$ and the image of $(i\times {\rm id})\circ f$ in the right copy of $N_0\times S^{k-1}$. Equivalently, $\Gtau(N)$ is the pushout 
\begin{equation} 
  \label{gyrationpo} 
  \diagram 
      S^{n-1}\times S^{k-1}\rto^-{{\rm id}\times \iota}\dto^{(i\times {\rm id})\circ f} &  S^{n-1}\times D^{k}\dto \\ 
      N_0\times S^{k-1}\rto & \Gtau(N). 
  \enddiagram 
\end{equation} 
Notice that the disc $D^{n}$ contained in $N$ that is removed to obtain $N_{0}$ results 
in an embedding 
\(\namedright{D^{n}\times S^{k-1}}{}{N\times S^{k-1}}\).  
Therefore~$\Gtau(N)$ is obtained from $N\times S^{k-1}$ by a surgery removing 
the interior of the embedding and gluing in $S^{n-1}\times D^{k}$ (commonly referred to as a 
surgery along $S^{k-1}$). The framing is determined by $\tau$.
In particular, $\Gtau(N)$ is a closed $(n+k-1)$-dimensional manifold. 
When $\tau$ is trivial, or equivalently $f$ is the identity map, we write $\tau=0$ and denote the manifold $\Gtau(N)$ by $\Gzero(N)$. 

In the special case when $k=2$ and $\tau$ is trivial, Gonz\'{a}lez-Acu\~{n}a~\cite{GA} refer to $\mathcal{G}^{0}(N)$ as a \emph{gyration}, an object further studied in the context of toric topology  
by Gitler and L\'{o}pez de Medrano~\cite{GLdM}. Also, when $k=2$, Duan \cite{D} refers to $\mathcal{G}^{\tau}(N)$ as a \emph{suspension} and uses it to study regular circle actions on manifolds based on the work of Goldstein and Lininger \cite{GL} on $6$-manifolds.

The goal of this section is to prove the homotopy decomposition of $\Omega\Gtau(N)$ 
in Theorem~\ref{gyrationtypeintro}. This requires several steps along the way. One ingredient is the classical $J$-homomorphism
\[
J: \pi_{k-1} (O(n))\rightarrow \pi_{n+k-1}(S^n). 
\] 
This is defined as follows. Represent an element in $\pi_{k-1}(S^{n-1})$ by a map 
\(\tau\colon\namedright{S^{k-1}}{}{O(n)}\).  
Using the action 
\(\theta\colon\namedright{O(n)\times S^{n-1}}{}{S^{n-1}}\) 
we obtain a composite 
\begin{equation} 
  \label{Jhom} 
  \nameddright{S^{k-1}\times S^{n-1}}{\tau\times 1}{O(n)\times S^{n-1}}{\theta}{S^{n-1}}. 
\end{equation}  
In general, the standard quotient map 
\(\namedright{A\times B}{}{A\wedge B}\) 
has the property that its suspension has a right homotopy inverse 
\(\mathfrak{t}\colon\namedright{\Sigma A\wedge B}{}{\Sigma(A\times B)}\) 
which can be chosen such that $\Sigma\pi_{1}\circ\mathfrak{t}$ and $\Sigma\pi_{2}\circ\mathfrak{t}$ are null homotopic, 
where $\pi_{1}$ and $\pi_{2}$ are the projections onto the first and second factor respectively. 
In our case, suspending and precomposing~(\ref{Jhom}) with $\mathfrak{t}$ gives a map 
\(\tau'\colon\namedright{S^{n+k-1}}{}{S^{n}}\) 
representing a class in $\pi_{n+k-1}(S^{n})$. Define $J([\tau])=[\tau']$. It is a standard fact that~$J$ 
is well-defined and is a group homomorphism.

In the stable range, that is, when $n\geq k+1$, famous work of Adams \cite{A} and Quillen \cite{Q} shows that the image of $J$ is
\begin{equation}\label{imJeq}
{\rm Im}\, J\cong \left\{\begin{array}{cc}
0 & k\equiv 3, 5, 6, 7 \ {\rm mod} \ 8, \\
\mathbb{Z}/2 & k\equiv 1, 2\ {\rm mod} \ 8, k\neq 1, \\
\mathbb{Z}/d_s& k=4s,
\end{array}\right.
\end{equation}
where $d_s$ is the denominator of $B_s/4s$ and $B_s$ is the $s$-th Bernoulli number defined by 
\[\frac{z}{e^z-1}=1-\frac{1}{2}z-\sum\limits_{s\geq 1} B_s \frac{z^{2s}}{(2s)!}.\] 
For each $k\geq 2$, let $\mathcal{P}_{k}$ be the set of prime numbers such that
\begin{equation}\label{pkdefeq}
\mathcal{P}_{k}= \left\{\begin{array}{cc}
\emptyset & k\equiv 3, 5, 6, 7 \ {\rm mod} \ 8, \\
\{2\} & k\equiv 1, 2\ {\rm mod} \ 8, k\neq 1, \\
\{p~|~p~{\rm divides}~d_s\}& k=4s,
\end{array}\right.
\end{equation}

\begin{lemma}
Let $n\geq k+2\geq 4$. After localization away from $\mathcal{P}_k$, there exists a map $s: S^{n-1}\times D^{k}\rightarrow N_0$ such that the diagram
\begin{equation} 
  \label{sdiag} 
  \diagram 
      S^{n-1}\times S^{k-1}\rto^-{{\rm id}\times \iota}\dto^{(i\times {\rm id})\circ f} &  S^{n-1}\times D^{k}\dto^{s} \\ 
    N_0\times S^{k-1}\rto^-{\pi_1} & N_0
  \enddiagram 
\end{equation} 
homotopy commutes, where $\pi_{1}$ is the projection onto the first factor. 

Moreover, when $f$ is the identity map, $s$ can be chosen so that Diagram (\ref{sdiag}) homotopy commutes without localization. 
\end{lemma}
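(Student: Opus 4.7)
The plan is to exploit the fact that $S^{n-1}\times D^k$ deformation retracts onto $S^{n-1}\times\{0\}\cong S^{n-1}$, so any map from $S^{n-1}\times D^k$ into $N_0$ is determined up to homotopy by its restriction to $S^{n-1}$. The natural candidate is $s=i\circ p$, where $p\colon S^{n-1}\times D^k\to S^{n-1}$ is the first-factor projection. With this choice the top-right composite of the diagram becomes $(a,t)\mapsto i(a)=i\circ q$, where $q\colon S^{n-1}\times S^{k-1}\to S^{n-1}$ is the analogous projection, while the bottom-left composite unwinds to $(a,t)\mapsto i(\tau(t)\cdot a)=i\circ \mu$, where $\mu(a,t)=\tau(t)\cdot a$. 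Hence the diagram commutes up to homotopy if and only if $i\circ\mu\simeq i\circ q$.

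To compare $\mu$ and $q$ I first normalize: since $n\geq k+2>k$, one has $\pi_{k-1}(S^{n-1})=0$, so the long exact sequence of the fibration $O(n-1)\to O(n)\to S^{n-1}$ shows that $\tau$ is homotopic to a map landing in $O(n-1)$, i.e.\ each $\tau(t)$ fixes the basepoint of $S^{n-1}$. Since deforming $\tau$ only alters the diagram up to homotopy, I assume this normalization. Then $\mu$ and $q$ both restrict to the identity on $S^{n-1}\times\{\ast\}$ and to the constant map on $\{\ast\}\times S^{k-1}$, so they agree on the subcomplex $A=S^{n-1}\vee S^{k-1}$. The cofibration
\[A\hookrightarrow S^{n-1}\times S^{k-1}\to S^{n-1}\wedge S^{k-1}=S^{n+k-2}\]
then pins down the obstruction to a homotopy $i\circ\mu\simeq i\circ q$ rel $A$ to a single element $i\circ\bar\mu\in\pi_{n+k-2}(N_0)$, where $\bar\mu\colon S^{n+k-2}\to S^{n-1}$ is the induced top-cell part of $\mu$ (the corresponding part of $q$ is null since $q$ factors through $S^{n-1}$).

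The key step is to identify $\bar\mu$ as an image-of-$J$ element. Via the James splitting $\Sigma(X\times Y)\simeq \Sigma X\vee \Sigma Y\vee \Sigma(X\wedge Y)$, the suspension $\Sigma\bar\mu$ is the component of $\Sigma\mu$ on the $\Sigma(S^{n-1}\wedge S^{k-1})=S^{n+k-1}$ summand, which is precisely the Hopf construction of $\mu$ and therefore represents $J(\tau)\in\pi_{n+k-1}(S^n)$. Since $n\geq k+2$, Freudenthal's theorem guarantees that $\Sigma\colon \pi_{n+k-2}(S^{n-1})\to \pi_{n+k-1}(S^n)$ is an isomorphism, so $\bar\mu$ is the unique preimage of $J(\tau)$ and lies in the image of $J$. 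By (\ref{imJeq}) and (\ref{pkdefeq}), after localizing away from $\mathcal{P}_k$ the image of $J$ vanishes; hence $\bar\mu$, and \emph{a fortiori} $i\circ\bar\mu$, becomes nullhomotopic, producing the required homotopy for $s=i\circ p$. When $f$ is the identity, $\tau$ is constant at the identity matrix, so $\mu=q$ identically and no localization is needed.

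The main obstacle is the identification $\Sigma\bar\mu=J(\tau)$. This reduces to verifying that the James splitting of $\Sigma(S^{n-1}\times S^{k-1})$ projects $\Sigma\mu$ onto the top wedge summand $\Sigma(S^{n-1}\wedge S^{k-1})$ through a map that agrees with the Hopf-construction recipe used in the excerpt's definition of $J$. This is standard but requires careful bookkeeping of basepoints and of the coordinate formulas for the join and the Hopf construction, together with the basepoint normalization of $\tau$ above, in order to pass cleanly between the pointed and unpointed settings.
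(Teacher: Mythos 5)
Your proof is correct and follows essentially the same approach as the paper's: both take $s=i\circ\pi_1$, reduce the problem to killing a top-cell obstruction whose suspension is identified with $J([\tau])$ via the Hopf construction, and then desuspend using Freudenthal. The only cosmetic difference is that you normalize $\tau$ to land in $O(n-1)$ and quotient by the wedge $S^{n-1}\vee S^{k-1}$, whereas the paper instead factors through the half-smash $S^{n-1}\rtimes S^{k-1}\simeq S^{n-1}\vee(S^{n-1}\wedge S^{k-1})$ without normalizing; both rest on the same vanishing $\pi_{k-1}(S^{n-1})=0$.
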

\begin{proof}
By the definition of $f$, $(\pi_1\circ (i\times {\rm id})\circ f)(a, t)=i(\tau(t)a)$. Thus 
the composition $\pi_1\circ (i\times {\rm id})\circ f$ is the same as the composition 
\[\gamma\colon\namedddright{S^{n-1}\times S^{k-1}}{1\times\tau}{S^{n-1}\times O(n)}{\theta}{S^{n-1}}{i}{N_{0}}.\] 
We therefore want to show that $\gamma$ extends to a map 
\(s\colon\namedright{S^{n-1}\times D^{k}}{}{N_{0}}\). 
To do this, we will show that $\theta\circ(1\times\tau)$ extends to a map 
\(s'\colon\namedright{S^{n-1}\times D^{k}}{}{S^{n-1}}\) 
and set $s=i\circ s'$. 

Since $D^{k}$ is contractible, showing that $\theta\circ(1\times\tau)$ extends to $s'$ is equivalent 
to showing that there is a homotopy commutative diagram 
\begin{equation} 
\begin{aligned}
  \label{gammadgrm} 
  \xymatrix{ 
     S^{n-1}\times S^{k-1}\ar[r]^-{\pi_{1}}\ar[d]^{1\times\tau} & S^{n-1}\ar[d]^{\gamma} \\ 
     S^{n-1}\times O(n)\ar[r]^-{\theta} & S^{n-1} }
     \end{aligned}
\end{equation}  
for some map $\gamma$. By hypothesis, $n\geq k+1$, so $\pi_{k-1}(S^{n-1})\cong 0$, implying 
that the restriction of $\theta\circ(1\times\tau)$ to $S^{k-1}$ is null homotopic. Thus there is 
a homotopy commutative diagram 
\begin{equation} 
\begin{aligned}
  \label{deltadgrm} 
  \xymatrix{ 
     S^{n-1}\times S^{k-1}\ar[r]^-{q}\ar[d]^{1\times\tau} & S^{n-1}\rtimes S^{k-1}\ar[d]^{\delta} \\ 
     S^{n-1}\times O(n)\ar[r]^-{\theta} & S^{n-1} }
     \end{aligned}
\end{equation} 
for some map $\delta$, where $S^{n-1}\rtimes S^{k-1}$ is the quotient space $(S^{n-1}\times S^{k-1})/S^{k-1}$ 
and $q$ is the quotient map. It is well known that there is a homotopy equivalence 
$S^{n-1}\rtimes S^{k-1}\simeq S^{n-1}\vee (S^{n-1}\wedge S^{k-1})$. Let $\delta_{1}$ and $\delta_{2}$ 
be the restrictions of $\delta$ to $S^{n-1}$ and $S^{n-1}\wedge S^{k-1}$ respectively. 
Since the restriction of both $\theta\circ(1\times\tau)$ and $q$ to $S^{n-1}$ is the identity map, 
we obtain that $\delta_{1}$ is the identity map. Thus if~$\delta_{2}$ is null homotopic then $\delta\circ q$ 
is homotopic to $\pi_{1}$ and we may take $\gamma$ to be the identity map in~(\ref{gammadgrm}). 

It remains to show that $\delta_{2}$ is null homotopic. As noted after~(\ref{Jhom}), the map 
\(\namedright{\Sigma S^{n-1}\wedge S^{k-1}}{\mathfrak{t}}{\Sigma(S^{n-1}\times S^{k-1})}\) 
composes trivially with the suspension of the projection to $S^{n-1}$ and is the identity map when 
pinched to $\Sigma S^{n-1}\wedge S^{k-1}$. 
Thus precomposing the suspension of~(\ref{deltadgrm}) with $\mathfrak{t}$ shows 
that $\Sigma\delta_{2}$ is homotopic to the composite 
\(\lllnameddright{\Sigma S^{n-1}\wedge S^{k-1}}{\mathfrak{t}}{\Sigma(S^{n-1}\times S^{k-1})}
      {\Sigma(\theta\circ(1\times\tau))}{S^{n}}\). 
But this is the definition of $J([\tau])$. 
If $f$ is the identity map, then $\tau$ is trivial and so $\Sigma\delta_{2}$ 
is null homotopic.
Otherwise, as $k\geq 2$ and we have localized away 
from $\mathcal{P}_{k}$, the image of the $J$-homomorphism is trivial. Hence in both cases $\Sigma\delta_{2}$ 
is null homotopic. Since $n\geq k+2$, the Freudenthal suspension theorem implies that the 
suspension map 
\(\namedright{\pi_{n+k-2}(S^{n-1})}{}{\pi_{n+k-1}(S^{n})}\) 
is an isomorphism. Thus $\delta_{2}$ is null homotopic. 
\end{proof} 

\begin{lemma} 
   \label{CQ} 
   Let $n\geq k+2\geq 4$. After localization away from $\mathcal{P}_k$, 
   there is a map 
   \(t\colon\namedright{\Gtau(N)}{}{N_0}\) 
   such that the composite 
   \(\nameddright{N_0\times S^{k-1}}{}{\Gtau(N)}{t}{N_0}\) 
   is homotopic to the projection onto the first factor and the composite 
   \(\nameddright{S^{n-1}\times D^k}{}{\Gtau(N)}{t}{N_0}\) 
   is homotopic to 
   \(\namedright{S^{n-1}\times D^k}{s}{N_0}\). 
   
Moreover, when $f$ is the identity map, the statement holds without localization.
\end{lemma}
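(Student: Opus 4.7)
The plan is to construct $t$ as a map out of the pushout defining $\mathcal{G}^{\tau}_{k}(N)$, using the preceding lemma to produce a compatible cone on the relevant homotopy pushout square. Concretely, the previous lemma (Diagram (\ref{sdiag})) yields, after localization away from $\mathcal{P}_k$, a map $s\colon S^{n-1}\times D^{k}\to N_{0}$ together with a homotopy
\[
s\circ(\mathrm{id}\times\iota)\simeq \pi_{1}\circ(i\times\mathrm{id})\circ f.
\]
Pairing this with the tautological square
\[
\pi_{1}\circ(i\times\mathrm{id})\circ f=\pi_{1}\circ(i\times\mathrm{id})\circ f,
\]
gives a homotopy commutative square whose domain is exactly the pushout diagram (\ref{gyrationpo}) and whose common target is $N_{0}$.

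Next, I would upgrade this homotopy commutative square to a strictly commutative one. The map $\mathrm{id}\times\iota\colon S^{n-1}\times S^{k-1}\hookrightarrow S^{n-1}\times D^{k}$ is a closed cofibration (as the inclusion of the boundary of a product of a manifold with a disc). By the homotopy extension property, the homotopy above may be used to replace $s$ by a homotopic map $s'\colon S^{n-1}\times D^{k}\to N_{0}$ satisfying
\[
s'\circ(\mathrm{id}\times\iota)=\pi_{1}\circ(i\times\mathrm{id})\circ f
\]
on the nose. Since $\mathcal{G}^{\tau}_{k}(N)$ is literally the pushout of (\ref{gyrationpo}), its universal property then produces a unique map $t\colon \mathcal{G}^{\tau}_{k}(N)\to N_{0}$ whose restriction along the structure map from $N_{0}\times S^{k-1}$ is $\pi_{1}$ and whose restriction along the structure map from $S^{n-1}\times D^{k}$ is $s'$. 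As $s'\simeq s$, the composite from $S^{n-1}\times D^{k}$ is homotopic to $s$, as required.

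For the \emph{moreover} clause, note that the only step invoking localization is the invocation of the preceding lemma to produce $s$; when $f$ is the identity map, that lemma asserts the existence of $s$ integrally. The subsequent cofibration replacement and pushout argument are purely formal and introduce no further restrictions on primes, so the map $t$ exists without localization in that case.

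I expect no serious obstacle here: the entire work is concentrated in the previous lemma, where the $J$-homomorphism and the set $\mathcal{P}_{k}$ enter. The mild technical point to handle with care is simply that the pushout (\ref{gyrationpo}) is a strict pushout, so one must rectify the homotopy coming from (\ref{sdiag}) using the cofibration property of $\mathrm{id}\times\iota$ before invoking the universal property; this is standard and does not affect the localization hypothesis.
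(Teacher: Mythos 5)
Your proposal is correct and follows essentially the same route as the paper: both reduce to the universal property of the pushout (\ref{gyrationpo}) together with the preceding lemma. The paper invokes the universal property of a homotopy pushout directly, while you make the underlying rectification explicit by using the homotopy extension property of the cofibration $\mathrm{id}\times\iota$ to replace $s$ by a strictly compatible $s'$; this is a standard unpacking of the same argument.
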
 

\begin{proof} 
Consider the diagram 
\[\xymatrix{ 
   S^{n-1}\times S^{k-1}\ar[r]^{{\rm id}\times \iota}\ar[d]^{(i\times {\rm id})\circ f} & S^{n-1}\times D^k\ar[d]\ar@/^/[ddr]^{s} &  \\ 
   N_0\times S^{k-1}\ar[r]\ar@/_/[drr]_{\pi_{1}} & \Gtau(N) \ar@{.>}[dr]^(0.4){t} & \\ 
   & & N_0 }  
\]
where the inner square is a pushout by Diagram (\ref{gyrationpo}) and the outer square homotopy 
commutes by Diagram~(\ref{sdiag}). Since ${\rm id}\times \iota$ is obviously a cofibration, the inner pushout is a homotopy pushout. Hence there is a map $t$ that makes the two triangular regions homotopy commute. 
\end{proof} 

For the remainder of the section, we always suppose that $n\geq k+2\geq 4$ and work in the homotopy category after localization away from $\mathcal{P}_k$.
Consequently, if  
\(i_{1}\colon\namedright{N_0}{}{N_0\times S^{k-1}}\) 
is the inclusion of the first factor, then by Lemma~\ref{CQ} the composite 
\(\namedddright{N_0}{i_{1}}{N_0\times S^{k-1}}{}{\Gtau(N)}{t}{N_0}\) 
is homotopic to $\pi_{1}\circ i_{1}$, which is homotopic to the identity map on $N_0$. 
This proves the following. 

\begin{lemma} 
   \label{tinverse} 
   The map 
   \(\namedright{\Gtau(N)}{t}{N_0}\) 
   has a right homotopy inverse.~$\qqed$ 
\end{lemma}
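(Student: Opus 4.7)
The plan is to simply write down an explicit candidate right inverse and check it works by appealing to Lemma~\ref{CQ}. Let $i_{1}\colon N_{0}\to N_{0}\times S^{k-1}$ be the inclusion of the first factor using a choice of basepoint in $S^{k-1}$, and let $j\colon N_{0}\times S^{k-1}\to \mathcal{G}^{\tau}_{k}(N)$ denote the canonical map into the pushout from Diagram~(\ref{gyrationpo}). Define
\[
r\colon \namedddright{N_{0}}{i_{1}}{N_{0}\times S^{k-1}}{j}{\mathcal{G}^{\tau}_{k}(N).}{}{}
\]
I claim $r$ is a right homotopy inverse to $t$.

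To verify this, consider the composite $t\circ r = t\circ j\circ i_{1}$. By Lemma~\ref{CQ}, the composite $t\circ j$ is homotopic to the projection $\pi_{1}\colon N_{0}\times S^{k-1}\to N_{0}$. Composing with $i_{1}$ gives $\pi_{1}\circ i_{1}$, which by construction of $i_{1}$ is the identity map on $N_{0}$. Hence $t\circ r$ is homotopic to the identity, which is exactly what is required.

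There is no real obstacle here: every ingredient has been set up in Lemmas preceding the statement, and the only thing to do is to string them together. The only subtlety worth flagging is that the lemma is implicitly stated in the localized category (since Lemma~\ref{CQ} required localization away from $\mathcal{P}_{k}$ in the general case), although when the framing $f$ is the identity no localization is needed, in accordance with the second clause of Lemma~\ref{CQ}.
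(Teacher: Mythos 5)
Your proof is correct and matches the paper's argument exactly: both precompose $t$ with the inclusion $N_{0}\hookrightarrow N_{0}\times S^{k-1}\to\mathcal{G}^{\tau}_{k}(N)$ and invoke Lemma~\ref{CQ} to identify $t\circ j$ with $\pi_{1}$, so that the composite is the identity. Your remark about the implicit localization (and its absence when $f=\mathrm{id}$) is also in line with the paper's standing convention for the remainder of that section.
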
 
 
We now identify some homotopy fibres associated to the maps in Lemma~\ref{CQ}. 
Define the spaces~$F$ and $H$ and the map $g$ by the homotopy fibrations  
\begin{equation} 
\label{N0cube1} 
\begin{split} 
\nameddright{F}{g}{S^{n-1}}{i}{N_{0}} \\ 
\nameddright{H}{}{\Gtau(N)}{t}{N_{0}}.  
\end{split} 
\end{equation} 
If 
\(i_{1}\colon\namedright{S^{n-1}}{}{S^{n-1}\times D^{k}}\) 
is the inclusion of the first factor, then as $s\circ i_{1}\simeq i$ there is a homotopy fibration diagram 
\[\diagram 
     F\rto^-{g}\dto^{f'} & S^{n-1}\rto^-{i}\dto^{i_{1}} & N_{0}\ddouble \\ 
     F'\rto & S^{n-1}\times D^{k}\rto^-{s} & N_{0}.  
  \enddiagram\] 
that defines the space $F'$ and the map $f'$.   
Since $i_{1}$ is a homotopy equivalence, the five-lemma applied to the long exact sequence of 
homotopy groups implies that $f'$ induces an isomorphism on homotopy groups and so is a 
homotopy equivalence since all spaces are assumed to have the homotopy type of $CW$-complexes. 
Thus there is a homotopy fibration 
\begin{equation} 
\label{N0cube2} 
\nameddright{F}{g'}{S^{n-1}\times D^{k}}{s}{N_{0}} 
\end{equation} 
where $g'=i_{1}\circ g$. Next, define the space $F''$ and the map $f''$ by the homotopy fibration diagram 
\[\diagram 
       F''\rto\dto^{f''} & S^{n-1}\times S^{k-1}\rto^-{s\circ(id\times\iota)}\dto^{id\times\iota} 
           & N_{0}\ddouble \\ 
       F\rto^-{g'} & S^{n-1}\times D^{k}\rto^-{s} & N_{0}. 
  \enddiagram\] 
This fibration diagram implies that $F''$ is the homotopy pullback of $g'$ and $id\times\iota$. 
Since $g'=i_{1}\circ g$, there is an iterated homotopy pullback diagram 
\begin{equation} 
  \label{N0cubea} 
  \diagram 
      F\times S^{k-1}\rto^-{g\times id}\dto^{\pi_{1}} & S^{n-1}\times S^{k-1}\rto^-{id\times id}\dto^{\pi_{1}} 
           & S^{n-1}\times S^{k-1}\dto^{id\times\iota} \\ 
      F\rto^-{g} & S^{n-1}\rto^-{i_{1}} & S^{n-1}\times D^{k}. 
  \enddiagram 
\end{equation}  
Here, the right square is a homotopy pullback since $D^{k}$ is contractible, and the left square 
is a homotopy pullback by the naturality of the projection $\pi_{1}$. Since the outer rectangle 
is the homotopy pullback of $g^\prime=i_{1}\circ g$ and $id\times\iota$, we see that $F''\simeq F\times S^{k-1}$ 
and $f''\simeq\pi_{1}$. Thus there is a homotopy fibration 
\begin{equation} 
\label{N0cube3} 
\llnameddright{F\times S^{k-1}}{g\times id}{S^{n-1}\times S^{k-1}}{s\circ(id\times\iota)}{N_{0}}. 
\end{equation} 

Therefore, composing each of the four corners of~(\ref{gyrationpo}) with the map 
\(\namedright{\Gtau(N)}{t}{N_0}\) 
and taking homotopy fibres, from~(\ref{N0cube1}), (\ref{N0cube2}) and~(\ref{N0cube3}) 
we obtain a homotopy commutative cube 
\begin{equation} 
  \label{Qcube} 
  \spreaddiagramcolumns{-1pc}\spreaddiagramrows{-1pc} 
   \diagram
      F\times S^{k-1}\rrto^-{a}\drto^-{b}\ddto^-(0.33){g\times id} & & F\dline^-{g'}\drto & \\
      & S^{k-1}\rrto\ddto^(0.25){i_{2}} & \dto & H\ddto \\
      S^{n-1}\times S^{k-1}\rline^(0.6){{\rm id}\times \iota}\drto^(0.6){(i\times {\rm id})\circ f} & \rto & S^{n-1}\times D^k\drto & \\
      & N_0\times S^{k-1}\rrto & & \Gtau(N)
  \enddiagram 
\end{equation}  
in which the bottom face is a homotopy pushout and the four sides are homotopy 
pullbacks, and the maps $a$ and $b$ are induced maps of fibres. Mather's Cube 
Lemma~\cite{M} implies that the top face is a homotopy pushout. (Technically, 
Mather used a different definition of a ``homotopy commutative cube" as he needed to 
prove his Cube Lemma in greater generality. In our case there is the stronger hypothesis 
that the sides of the cube are homotopy pullbacks over a common base, as they have been 
obtained by composing with the map 
\(\namedright{\Gtau(N)}{t}{N_{0}}\). 
This, together with the fact that the bottom face is a homotopy pushout, lets one use~\cite[Lemma 3.1]{PT}, 
for example, to show that the top face of the cube in~(\ref{Qcube}) is also a homotopy pushout.)

We would like to identify the maps $a$ and $b$ in~(\ref{Qcube}). Let $I=[0,1]$ be the unit interval 
with $0$ as the basepoint.
Recall that the \emph{reduced join} of two pointed spaces $X$ and $Y$ is the quotient space 
\[X\ast Y=(X\times I\times Y)/\sim\] 
where $(x,0,\ast)\sim (x',0,\ast)$, $(\ast,1,y)\sim (\ast,1,y')$ and $(\ast,t,\ast)\sim (\ast,1,\ast)$ 
for all $x,x'\in X$, $y,y'\in Y$ and $t\in I$. It is well known that there is a homotopy equivalence 
$X\ast Y\simeq\Sigma X\wedge Y$. 

\begin{lemma} 
   \label{cubeab} 
   The maps $a$ and $b$ in~(\ref{Qcube}) are homotopic to the projections 
   onto the first and second factor respectively. Consequently, there is a homotopy equivalence 
   $H\simeq F\ast S^{k-1}\simeq \Sigma^{k}F$. 
\end{lemma}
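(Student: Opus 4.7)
The plan is to identify $a$ and $b$ as the two canonical projections from $F\times S^{k-1}$. Once that is done, the top face of the cube (\ref{Qcube}), which is a homotopy pushout by Mather's cube lemma, reduces to the standard pushout of projections $F\leftarrow F\times S^{k-1}\to S^{k-1}$, whose homotopy type is by definition the join $F\ast S^{k-1}$. The classical equivalence $X\ast Y\simeq\Sigma(X\wedge Y)$ then yields $H\simeq\Sigma(F\wedge S^{k-1})\simeq\Sigma^{k}F$, proving the lemma.

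Since $a$ and $b$ are induced maps between homotopy fibres over $N_{0}$ (via $t$), each is determined up to homotopy by the requirement that its defining square commute. For $a$, that square has bottom edge $\mathrm{id}\times\iota:S^{n-1}\times S^{k-1}\to S^{n-1}\times D^{k}$ and right edge $g':F\to S^{n-1}\times D^{k}$ coming from the fibration (\ref{N0cube2}). But the iterated homotopy pullback diagram (\ref{N0cubea}) has exactly this outer rectangle and exhibits $\pi_{1}:F\times S^{k-1}\to F$ as a valid filling. Uniqueness then forces $a\simeq\pi_{1}$.

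For $b$, the square to fill has bottom edge $(i\times\mathrm{id})\circ f:S^{n-1}\times S^{k-1}\to N_{0}\times S^{k-1}$ and right edge the fibre inclusion $S^{k-1}\hookrightarrow N_{0}\times S^{k-1}$, $t\mapsto(\ast,t)$. I propose to check that $\pi_{2}:F\times S^{k-1}\to S^{k-1}$ fills this square up to homotopy. Agreement on the second coordinate is immediate. For the first coordinate, the homotopy commutativity of (\ref{sdiag}) gives
\[
\pi_{1}\circ(i\times\mathrm{id})\circ f\circ(g\times\mathrm{id})\;\simeq\;s\circ(\mathrm{id}\times\iota)\circ(g\times\mathrm{id}),
\]
and the right-hand side is the composition appearing in the fibration (\ref{N0cube3}), hence is null homotopic. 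Thus $\pi_{2}$ does fill the square, and by uniqueness $b\simeq\pi_{2}$.

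The main obstacle is the identification of $b$, since one must trace through the definitions of the three auxiliary fibrations (\ref{N0cube1})--(\ref{N0cube3}) and apply (\ref{sdiag}) in just the right way; by contrast, the identification of $a$ and the final collapse to a join are essentially formal.
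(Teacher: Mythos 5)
Your identification of $a$ is fine in substance: diagram~(\ref{N0cubea}) \emph{is} the rear face of the cube, so its top-left map $\pi_{1}$ is $a$ by construction (this is an identification, not really a ``uniqueness'' invocation). The problem is with $b$. You verify that $\pi_{2}$ makes the left face of the cube commute and then conclude ``by uniqueness $b\simeq\pi_{2}$,'' but that uniqueness is not available: the map out of a homotopy pullback is \emph{not} determined by the mere commutativity of the square. Indeed, the induced map of homotopy fibres is pinned down by the actual map of fibration sequences over $N_{0}$, not by the weaker condition that some replacement map also closes the square up to homotopy. A simple counterexample: for the pullback of $X\to\ast\leftarrow Y$, the pullback is $X\times Y$ with projection $\pi_{1}$, yet \emph{any} map $X\times Y\to X$ makes the square commute. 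So step~2 of your $b$ argument is a genuine gap.

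The paper closes this gap with a short retraction trick that sidesteps uniqueness entirely. Since $\pi_{2}\circ i_{2}=\mathrm{id}_{S^{k-1}}$, one has $b=\pi_{2}\circ i_{2}\circ b$. Now use the commutativity of the left face that $b$ satisfies \emph{by construction}, $i_{2}\circ b\simeq(i\times\mathrm{id})\circ f\circ(g\times\mathrm{id})$, and the pointwise computation $\pi_{2}\circ(i\times\mathrm{id})\circ f=\pi_{2}$ (from $f(a,t)=(\tau(t)a,t)$), to get
\[
b\;=\;\pi_{2}\circ i_{2}\circ b\;\simeq\;\pi_{2}\circ(i\times\mathrm{id})\circ f\circ(g\times\mathrm{id})\;=\;\pi_{2}\circ(g\times\mathrm{id})\;=\;\pi_{2}.
\]
Note that you already carried out the needed pointwise computation as part of your ``second coordinate'' check; you simply applied it in the wrong direction. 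The first-coordinate null-homotopy you establish is true but unnecessary. Once $a\simeq\pi_{1}$ and $b\simeq\pi_{2}$ are in hand, your collapse of the top pushout to the join $F\ast S^{k-1}\simeq\Sigma^{k}F$ is exactly the paper's concluding step and is correct.
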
 

\begin{proof} 
The rear face of the cube~(\ref{Qcube}) is the iterated homotopy pullback in~(\ref{N0cubea}), 
therefore $a\simeq\pi_{1}$. Next, the map $b$ is induced by the 
homotopy pullback diagram 
\[\diagram 
      F\times S^{k-1}\rto^-{g\times {\rm id}}\dto^{b} & S^{n-1}\times S^{k-1}\rto^-{s\circ ({\rm id}\times \iota)}\dto^{(i\times {\rm id})\circ f} & N_0\ddouble \\  
      S^{k-1}\rto^-{i_{2}} & N_0\times S^{k-1}\rto^-{\pi_{1}} & N_0,
  \enddiagram\] 
where the right square commutes up to homotopy by Lemma \ref{CQ}. Generically, let $\pi_{2}$ 
be the projection onto the second factor. Observe that by definition of $f$, we have $(\pi_2\circ (i\times {\rm id})\circ f)(a, t)=\pi_2(i(\tau(t)a), t)=t=\pi_2(a, t)$, that is, $\pi_2\circ (i\times {\rm id})\circ f=\pi_2$. Thus $\pi_2\circ (i\times {\rm id})\circ f\circ (g\times {\rm id})=\pi_{2}\circ(g\times id)=\pi_2$, and therefore $b=\pi_2\circ i_2\circ b$ is homotopic to $\pi_2$. 

Therefore the homotopy pushout giving $H$ in the top face of~(\ref{Qcube}) 
is equivalent, up to homotopy, to that given by the projections 
\(\namedright{F\times S^{k-1}}{\pi_{1}}{F}\) 
and 
\(\namedright{F\times S^{k-1}}{\pi_{2}}{S^{k-1}}\). 
This homotopy pushout is in turn equivalent, up to homotopy, to the pushout given by the inclusions 
\(\namedright{F\times S^{k-1}}{1\times i}{F\times CS^{k-1}}\) 
and 
\(\namedright{F\times S^{k-1}}{i\times 1}{CF\times S^{k-1}}\), 
where $CS^{k-1}$ and $CF$ are the reduced cones on $S^{k-1}$ and $F$ respectively and the map $i$ 
in both cases is the inclusion into the base of the cone. The latter pushout is, by definition, 
the join $F\ast S^{k-1}$. 
\end{proof} 

Combining these results gives the following. 

\begin{proof}[Proof of Theorem~\ref{gyrationtypeintro}]
Recall from~(\ref{N0cube1}) that the homotopy fibre of $i: S^{n-1}\rightarrow N_0$ is $F$ and 
the homotopy fibre of 
\(\namedright{\Gtau(N)}{t}{N_0}\) 
is $H$. By Lemma~\ref{cubeab}, $H\simeq \Sigma^k F$.
This proves the first assertion. 
By Lemma~\ref{tinverse}, $t$ has a right homotopy inverse. The asserted homotopy 
equivalence for $\Omega \Gtau(N)$ immediately follows.  
\end{proof}  


\section{A loop decomposition of a connected sum}
\label{sec: sum}
Let $S^{k-1}\stackrel{j}{\longrightarrow} E \stackrel{p}{\longrightarrow} M$ be a fibre bundle over a connected closed smooth $n$-manifold $M$ with fibre a connected sphere $S^{k-1}$ (so $k\geq 2$). Let $N$ be a connected closed smooth $n$-manifold. Let $q: N\# M\stackrel{}{\longrightarrow} M$ be the map that pinches $N$ into a point. Taking the pullback of the fibre bundle~$p$ with the map $q$ induces a morphism of fibre bundles
\begin{equation}\label{ENdefeq} 
   \diagram 
       S^{k-1}\rto^-{j_{N}}\ddouble & E_{N}\rto^-{p_{N}}\dto & N\# M\dto^{q} \\ 
       S^{k-1}\rto^-{j} & E\rto^-{p} & M
   \enddiagram 
\end{equation} 
that defines the manifold $E_{N}$ and the maps $j_{N}$ and $p_{N}$.
In this section, we study the homotopy type of $\Omega(N\# M)$ with the help of $\Gtau(N)$ introduced and studied in Section \ref{sec: gyration}. In the sequel a special case will be used in the proof of Theorem \ref{stabledecthm}. We start with a lemma that generalizes a result of Duan~\cite{D} for the case when $k=2$.  

\begin{lemma}\label{ENlemma}
Suppose that $k\in\{2,4,8\}$ and the fibre inclusion $j: S^{k-1}\rightarrow E$ can be extended to an embedding $D^k\hookrightarrow E$. Then there is a diffeomorphism
\[
E_N\cong \Gtau(N)\#E 
\]
for some map $\tau: S^{k-1}\longrightarrow O(n)$.
Further, under this diffeomorphism the fibre inclusion $j_N\colon S^{k-1}\rightarrow E_N$ can be extended to an embedding $D^k\hookrightarrow \Gtau(N)\#E$. 
\end{lemma}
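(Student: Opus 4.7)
The plan is to realise both $E_N$ and $\mathcal{G}^{\tau}_k(N)\#E$ as manifolds obtained by gluing the same two building blocks — $E|_{M_0}$ and $N_0\times S^{k-1}$ — along their common boundary $S^{n-1}\times S^{k-1}$, and then to match the two gluings. The framing $\tau$ will emerge naturally as the discrepancy between the two boundary identifications.

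First I would fix geometric data. Choose the disc $D^n_M\subset M$ removed to form $M_0$ to be small enough that the bundle trivialises over it, fix a trivialisation $\phi\colon E|_{D^n_M}\xrightarrow{\cong} D^n_M\times S^{k-1}$, and arrange that the fibre $S^{k-1}$ over the centre of $D^n_M$ coincides with $\partial D^k$ for the given embedded disc in $E$. Since the pinch map $q\colon N\#M\to M$ is the identity on $M_0$ and maps $N_0$ into $D^n_M$ (with identity on the boundary sphere), pulling back the bundle along $q$ immediately gives the decomposition $E_N=E|_{M_0}\cup(N_0\times S^{k-1})$, glued along $S^{n-1}\times S^{k-1}$ via the boundary restriction of $\phi$.

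The heart of the proof is the manifold-with-boundary identity $(S^{n-1}\times D^k)\#E\cong E|_{M_0}$. To see it, I would exploit the standard sphere decomposition $S^{n+k-1}=(S^{n-1}\times D^k)\cup_{S^{n-1}\times S^{k-1}}(D^n\times S^{k-1})$, which rewrites $S^{n-1}\times D^k$ as $S^{n+k-1}-\mathrm{int}(D^n\times S^{k-1})$. Performing a connected sum with $E$ in the interior of the $S^{n-1}\times D^k$ piece therefore yields $(S^{n-1}\times D^k)\#E\cong E-\mathrm{int}(D^n\times S^{k-1})$, where the embedded $D^n\times S^{k-1}\subset E$ is the image of the standard one from $S^{n+k-1}$ under the diffeomorphism $S^{n+k-1}\#E\cong E$. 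The hypothesis that $j$ extends to an embedding $D^k\hookrightarrow E$ is precisely what allows an ambient isotopy to identify this embedded $D^n\times S^{k-1}$ with the fibre tube $\phi^{-1}(D^n_M\times S^{k-1})$: both are tubular neighbourhoods of a $(k-1)$-sphere in $E$ which bounds a smoothly embedded $D^k$ with trivial normal bundle, and in the dimension range under consideration uniqueness of such tubular neighbourhoods up to isotopy yields the desired identification.

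Combining this with the gyration decomposition $\mathcal{G}^{\tau}_k(N)\#E=\big((S^{n-1}\times D^k)\#E\big)\cup_f(N_0\times S^{k-1})$, we obtain an identification with $E|_{M_0}\cup_{\psi\circ f}(N_0\times S^{k-1})$ for some self-diffeomorphism $\psi$ of $S^{n-1}\times S^{k-1}$ measuring the mismatch between the two trivialisations of the boundary. Because the normal bundle of $D^k$ in $E$ is trivial ($D^k$ is contractible), the discrepancy on $\partial D^k=S^{k-1}$ between the framing induced from $D^k$ and the one induced from $\phi$ is classified by a map $\tau\colon S^{k-1}\to O(n)$, and $\psi$ takes the form $\psi(a,t)=(\tau(t)a,t)$ — precisely matching the shape of the gyration twist. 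Setting the gyration framing so that $f=\psi^{-1}$ then produces the diffeomorphism $E_N\cong\mathcal{G}^{\tau}_k(N)\#E$. For the last assertion, the fibre $j_N$ may be represented by a fibre near the pinching region which, under this diffeomorphism, sits on $\partial(S^{n-1}\times D^k)$ in the form $\{a\}\times S^{k-1}$, and then extends to the disc $\{a\}\times D^k\subset S^{n-1}\times D^k\subset\mathcal{G}^{\tau}_k(N)\#E$, giving the required embedding. The main obstacle I anticipate is the isotopy step in the key identification — arranging the generic embedded $D^n\times S^{k-1}$ produced by the sphere decomposition to coincide with the specific fibre tube of $p\colon E\to M$, and then reading off the framing $\tau$ correctly from the resulting twist.
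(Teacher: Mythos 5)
Your proof is correct and follows the same overall strategy as the paper's: decompose $E_N$ into $p_N^{-1}(N_0)\cong N_0\times S^{k-1}$ and $p_N^{-1}(M_0)\cong E-\mathrm{Im}(F_2)$, identify $E-\mathrm{Im}(F_2)$ with $(S^{n-1}\times D^k)\# E$, reassemble into $\mathcal{G}^{\tau}_k(N)\# E$, and read off $\tau$ from the framing discrepancy along the boundary $S^{n-1}\times S^{k-1}$. The one genuine difference is in how the central identification is established. The paper is constructive: using the extendability of $j$ to an embedded $D^k$, it explicitly builds an embedded $D^{n+k-1}\subset E$ containing $\mathrm{Im}(F_2)$ in its interior (shrinking the bounding disk until $\partial D^k$ lies on $\partial\,\mathrm{Im}(F_2)$, then thickening $D^k$ to $D^{n-1}\times D^k$), which exhibits $E\cong S^{n+k-1}\# E$ with the framed tube sitting in the sphere factor. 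You instead form $(S^{n-1}\times D^k)\# E$ as $E$ minus a ``standard'' framed tube inside a small disk and appeal to uniqueness of tubular neighbourhoods together with isotopy of nullhomotopic embedded $(k-1)$-spheres (available in the range $n\geq k+2$ used in the applications) to move that tube onto $\mathrm{Im}(F_2)$. Both routes are sound; the paper's has the minor practical advantage that it keeps the original disk $D^k$ visibly disjoint from $\mathrm{Im}(F_2)$, so the second assertion falls out immediately, whereas your argument for the second assertion (extending $\{a\}\times S^{k-1}\subset\partial(S^{n-1}\times D^k)$ to $\{a\}\times D^k$) should note that $a$ must be chosen so that $\{a\}\times D^k$ misses the small ball used to form the connected sum with $E$ inside the $S^{n-1}\times D^k$ piece, which is possible since that ball can be taken arbitrarily small.
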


\begin{remark} 
The restriction to $k\in\{2,4,8\}$ is due to the fact that a null homotopy for $j$ in the homotopy fibration 
\(\nameddright{S^{k-1}}{j}{E}{p}{M}\) 
implies that $S^{k-1}$ retracts off $\Omega M$ and is therefore an $H$-space. 
\end{remark} 

\begin{proof}
For a manifold $X$, let  $X_0$ denote $X$ with a small disc removed. In the case of a connected sum, 
observe that there is a homotopy cofibration
\[
N_0\stackrel{j}{\longrightarrow} N\# M\stackrel{q}{\longrightarrow} M,
\]
where $j$ is the canonical inclusion. In particular, the restriction of the bundle $p_N$ in~(\ref{ENdefeq}) to $N_0$ is trivial because it is formed by pulling back 
\(\namedright{E}{p}{M}\) 
and the trivial map $q\circ j$. It is therefore isomorphic as a bundle to the canonical projection $N_0\times S^{k-1}\rightarrow N_0$. Choose any trivialization $N_0\times S^{k-1}\stackrel{\cong}{\longrightarrow} p_N^{-1}(N_0)$. Its restriction to the boundary $\partial(N_{0}\times S^{k-1})=\partial N_{0}\times S^{k-1}=S^{n-1}\times S^{k-1}$ gives an embedding $f_1: S^{n-1}\times S^{k-1}\stackrel{}{\longrightarrow} p_N^{-1}(N_0)$.
On the other hand, the restriction of the bundle $p_N$ to~$M_0$ is obtained by pulling back 
 \(\namedright{E}{p}{M}\) 
 and the inclusion 
 \(\namedright{M_{0}}{}{M}\), 
 so it is isomorphic to $E-F_2(D^n\times S^{k-1})$ for some framing $F_2:D^n\times S^{k-1}\longrightarrow E$ of $S^{k-1}$. Denote the restriction of $F_2$ to the boundary of $D^{n}\times S^{k-1}$ by $f_2: S^{n-1}\times S^{k-1}\stackrel{}{\longrightarrow} p_N^{-1}(M_0)$. 
Then $E_N$ is obtained by gluing $p_N^{-1}(N_0)$ and $p_N^{-1}(M_0)$ together along their boundaries through the diffeomorphism $f_2\circ f_1^{-1}|_{f_1(S^{n-1}\times S^{k-1})}$.

Moreover, since the fibre inclusion $j: S^{k-1}\rightarrow E$ can be extended to an embedding $D^k\hookrightarrow E$, we claim that there exists 
an embedding $\alpha: D^{n+k-1}\hookrightarrow E$ such that ${\rm Im}(F_2)\subseteq \alpha(\accentset{\circ}{D}^{n+k-1})$ up to isotopy. To see this, observe that as the core sphere $S^{k-1}$ of ${\rm Im}(F_2)\cong D^n\times S^{k-1}$ bounds a disk in $E$ we can shrink the disk so that its boundary sphere lies on the torus boundary of ${\rm Im}(F_2)$, as displayed in Figure \ref{torusfigure}. Then, by the fact that
\[
D^{n+k-1}\cong (D^n\times S^{k-1}) \cup (D^{n-1}\times D^k),
\]
we can thicken the shrunken disk $D^k$ to $D^{n-1}\times D^k$ so that its union with ${\rm Im}(F_2)$ is the image of an embedding $\alpha: D^{n+k-1}\hookrightarrow E$ (notice in Figure \ref{torusfigure} that $D^{n-1}$ lives in the vertical dimensions). Finally, we further shrink the embedded image ${\rm Im}(F_2)$ so that it is disjoint from both the boundary sphere $S^{n+k-2}$ of $\alpha(D^{n+k-1})$ and $D^{n-1}\times D^k$ in $E$.

\begin{figure}[!htb]
\centering
\includegraphics[width=2.7in]{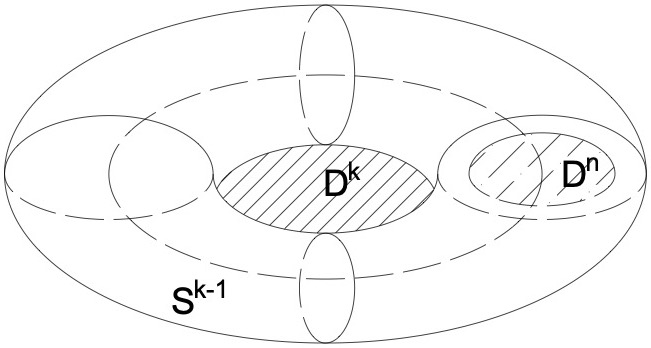}
\caption{The core sphere of the framed $(k-1)$-embedding bounds a disk}\label{torusfigure}
\end{figure}
Hence, there is a decomposition $E\cong S^{n+k-1}\#E$ such that the framing $F_2$ embeds into the $S^{n+k-1}$ factor. By abuse of notation we still denote the new framing and its restriction on the boundary by~$F_2$ and $f_2$ respectively. 

Consider the string of diffeomorphisms  
\begin{equation} 
\label{ENeqns} 
\begin{split}
E_N
& \cong p_N^{-1}(N_0)\cup_{f_2\circ f_1^{-1}} p_N^{-1}(M_0)\\
&\cong (N_0\times S^{k-1}) \cup_{f_2} (E-F_2(D^n\times S^{k-1}))\\
&\cong  (N_0\times S^{k-1}) \cup_{f_2} \big((S^{n+k-1}-(D^n\times S^{k-1})) \# E\big)\\
&\cong  (N_0\times S^{k-1}) \cup_{f_2} \big((S^{n-1}\times D^k) \# E\big) \\ 
& \cong \big((N_0\times S^{k-1})\cup_{f_{2}}(S^{n-1}\times D^{k})\big)\# E \\ 
&\cong  \Gtau(N)\#E. 
\end{split}
\end{equation} 
The first two hold by the identifications of $E_{N}$ and $p_{N}^{-1}(M_{0})$ at the start of the proof, 
the third holds by the decomposition $E\cong S^{n+k-1}\#E$ with the framing $F_2$ embeddng 
into the $S^{n+k-1}$ factor, the fourth holds since $S^{n+k-1}\cong D^{n}\times S^{k-1}\cup S^{n-1}\cup D^{k}$, 
the fifth holds since the construction of $\alpha$ has the property that the boundary ${\rm Im}(f_2)$ 
of ${\rm Im}(F_2)$ is disjoint from the boundary $S^{n+k-2}$ of $E_0$ so the order in which the 
connected sum is taken is irrelevant, and the sixth holds by the definition of 
$\Gtau(N)$ in~(\ref{gyrationdef}), where $\tau: S^{k-1}\rightarrow O(n)$ is determined 
by $f:=f_2^{-1}$ through $f(a, t)=(\tau(t)a,t)$.
This proves the first assertion of the lemma.  

For the second assertion, by the construction above the disk $D^k$ is disjoint from ${\rm Im}(F_2)$ (Figure~\ref{torusfigure}). Hence, it survives under the gluing process described in (\ref{ENeqns}), which gives us an embedding $D^k\hookrightarrow \Gtau(N)\#E$. 
\end{proof}

By the classical Whitney embedding theorem, when $n+k-1\geq 2k+1$, that is, when $n\geq k+2$, any fibre inclusion that is homotopically trivial can be extended to an embedding of the disc. Hence we immediately obtain the following corollary.

\begin{corollary}\label{ENcoro}
Suppose that $k\in\{2,4,8\}$, the fibre inclusion $j: S^{k-1}\rightarrow E$ is null homotopic and $n\geq k+2 \geq 4$. Then there is a diffeomorphism
\[
E_N\cong \Gtau(N)\#E 
\]
for some map $\tau: S^{k-1}\longrightarrow O(n)$. ~$\qqed$
\end{corollary}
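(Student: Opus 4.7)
The plan is to reduce the corollary directly to Lemma~\ref{ENlemma}. That lemma already produces the desired diffeomorphism $E_{N}\cong \mathcal{G}^{\tau}_{k}(N)\# E$ under the single hypothesis that the fibre inclusion $j\colon S^{k-1}\to E$ extends to a smooth embedding $D^{k}\hookrightarrow E$. So the only thing to do is to show that, when $j$ is null homotopic and $n\geq k+2$, such an extension exists.

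First I would extend $j$ as a continuous (hence, after smoothing rel.\ boundary, smooth) map $\bar{j}\colon D^{k}\to E$; this is possible precisely because $j$ is null homotopic and $(D^{k},S^{k-1})$ has the homotopy extension property. Next I would invoke the relative form of the Whitney embedding theorem to perturb $\bar{j}$ to an embedding without disturbing its behaviour on $\partial D^{k}=S^{k-1}$. The dimension count is exactly the Whitney range: the target manifold $E$ has dimension $n+k-1$, and the source disc has dimension $k$, so the condition $n+k-1\geq 2k+1$ required for a generic smooth map from a $k$-manifold into an $(n+k-1)$-manifold to be approximable by an embedding is equivalent to the standing hypothesis $n\geq k+2$. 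Since $j$ is already a smooth embedding on the boundary sphere and the dimension condition holds with strict inequality in codimension, the relative Whitney theorem produces an embedding $D^{k}\hookrightarrow E$ that agrees with $j$ on $S^{k-1}$.

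With this extension in hand, the hypotheses of Lemma~\ref{ENlemma} are satisfied, and the lemma supplies a map $\tau\colon S^{k-1}\to O(n)$ together with a diffeomorphism $E_{N}\cong \mathcal{G}^{\tau}_{k}(N)\# E$, which is the asserted conclusion.

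The only subtle point is the appeal to the relative embedding theorem: one must make sure that the perturbation used to put $\bar{j}$ in general position can be taken to be supported away from the boundary, so that the boundary embedding $j$ is preserved and the resulting disc meets the fibre sphere $S^{k-1}\subset E$ exactly along its own boundary. This is standard (for instance via a collar argument on $\partial D^{k}$ together with transversality), and is the main technical point of the proof. Everything else is a direct quotation of Lemma~\ref{ENlemma}.
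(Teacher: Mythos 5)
Your proposal is correct and follows essentially the same route as the paper: the paper observes just before the corollary that the classical Whitney embedding theorem, in the range $n+k-1\geq 2k+1$ (equivalently $n\geq k+2$), lets one extend a null-homotopic fibre inclusion to an embedded disc, and then Lemma~\ref{ENlemma} immediately gives the conclusion. You simply spell out the details the paper elides — extending $j$ continuously via the null homotopy, invoking the relative form of the Whitney theorem to perturb to an embedding rel boundary, and checking the dimension count — so the substance is identical.
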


We also need a strengthened version of Corollary~\ref{ENcoro} for the special case 
when $k=2$ that was proved by Duan. Recall that the manifold $\Gtau(N)$ is denoted by $\Gzero(N)$ when $\tau$ is trivial. 

\begin{lemma}\cite[Theorem B]{D} \label{duanthmb}
When $k=2$, $n\geq 4$, $E$ is simply connected and $M$ is non-spin, there is a diffeomorphism 
\[
 \hspace{6.1cm} E_N\cong \Gzero(N)\#E. \hspace{6.1cm}\Box
\]
\end{lemma}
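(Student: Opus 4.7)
My plan is to build on Corollary~\ref{ENcoro} and use the non-spin hypothesis to trivialise the framing that appears. Since $E$ is simply connected and $n\geq 4$, the fibre inclusion $j: S^1\hookrightarrow E$ is null-homotopic, so Corollary~\ref{ENcoro} already gives a diffeomorphism
\[
E_N\cong \mathcal{G}^{\tau}_2(N)\# E
\]
for some framing $\tau: S^1\to O(n)$. The remaining task is therefore to show that $\tau$ can be replaced by the trivial framing under the non-spin assumption on $M$.

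Since $n\geq 3$, framings are classified by $\pi_1(O(n))\cong \mathbb{Z}/2$, so there are only two homotopy classes for $\tau$. The non-trivial framing is controlled by a $w_2$-type obstruction: given a null-homotopy of $j$ to a spanning disc $D^2\hookrightarrow E$, extending the normal trivialisation provided by $\tau$ over $D^2$ is obstructed by $w_2$ of the normal bundle of $D^2$. Thus the framing can be modified by isotoping the spanning disc across any embedded surface on which $w_2$ evaluates non-trivially.

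The next step is to exhibit such a surface using the non-spin condition. Since $M$ is non-spin, there is an embedded $S^2\hookrightarrow M$ detecting $w_2(M)$, which lifts to an embedded $S^2$ in $E$ via a local section of the circle bundle. Sliding the spanning disc across this embedded $S^2$ (via a finger-move) alters the normal framing of $S^1\subset E$ by the generator of $\pi_1(O(n))\cong \mathbb{Z}/2$. After such a slide, the framing can be arranged to be trivial, and the resulting ambient diffeomorphism of $E$ produces a diffeomorphism $\mathcal{G}^{\tau}_2(N)\# E\cong \mathcal{G}^{0}_2(N)\# E$ as required.

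The main obstacle is to make the framing-exchange argument precise, in particular to verify that the isotopy across the $w_2$-detecting sphere really changes the framing by the generator of $\pi_1(O(n))$ and that the resulting ambient diffeomorphism is compatible with the gluing pattern defining $\mathcal{G}^{\tau}_2(N)$ in~(\ref{gyrationdef}). Since the statement is cited as \cite[Theorem~B]{D}, a complete proof would essentially reproduce Duan's handle-theoretic analysis; the role of the proof proposal is to identify that the only new ingredient beyond Corollary~\ref{ENcoro} is the absorption of the framing via the non-spin $w_2$ class.
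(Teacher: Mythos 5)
The paper does not prove this lemma; it is cited verbatim as Theorem~B of Duan~\cite{D} with the proof omitted, so there is nothing in the paper to compare your argument against. You correctly identify that after Corollary~\ref{ENcoro} the only remaining task is to show the framing $\tau\in\pi_1(O(n))\cong\mathbb{Z}/2$ can be made trivial, and that the non-spin hypothesis on $M$ must be what accomplishes this. That framing is the right invariant to focus on.

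However, the mechanism you propose does not work, and it fails precisely in the case the paper uses. First, you claim to lift a $w_2$-detecting sphere $S^2\hookrightarrow M$ to $E$ ``via a local section of the circle bundle.'' An $S^1$-bundle over $S^2$ has a global section only if its Euler class restricted to that sphere vanishes; for the Hopf bundle $S^1\to S^{2n+1}\to\mathbb{C}P^n$ (which is exactly the bundle used in the proof of Theorem~\ref{stabledecthm}), the Euler class restricted to any nonzero class in $H_2(\mathbb{C}P^n)$ is nonzero, so no such lift exists. Second, and more fundamentally, the ``slide across an embedded surface'' move changes a normal framing by $w_2$ of the normal bundle of that surface in the ambient manifold $E$, not by $w_2(M)$. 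Any embedded $S^2$ in $E=S^{2n+1}$ has stably trivial normal bundle (both $S^2$ and $S^{2n+1}$ are stably parallelizable), and since the normal bundle has rank $2n-1\geq 3$, stable triviality forces actual triviality, so $w_2$ of the normal bundle vanishes. Thus no slide inside $E$ can alter the framing at all; the obstruction you are trying to kill by an ambient isotopy of $E$ is not visible there. The non-spin hypothesis on $M$ must instead enter through the bundle data itself --- e.g.\ the choice of local trivialisation $F_2\colon D^n\times S^1\to E$ over a chart of $M$, or the trivialisation of $p_N^{-1}(N_0)$ --- rather than through an embedded surface in $E$. Reproducing Duan's actual argument would be necessary to fill this gap.
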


\begin{proposition}\label{sumloopdecprop}
Let $N$ and $M$ be two connected closed $n$-dimensional manifolds. Let 
$S^{k-1}\stackrel{j}{\longrightarrow} E \stackrel{p}{\longrightarrow} M$ 
be a fibre bundle such that $k\in\{2,4,8\}$, the fibre inclusion $j$ is null homotopic and $n\geq k+2\geq 4$. Then there is a homotopy equivalence 
\[
\Omega (N\# M)\simeq S^{k-1}\times \Omega (\Gtau(N)\#E)
  \]
for some map $\tau: S^{k-1}\longrightarrow O(n)$.

Alternatively, if $k=2$, $n\geq 4$, $E$ is simply connected and $M$ is non-spin, then there is a homotopy equivalence 
\[
\Omega (N\# M)\simeq S^{1}\times \Omega (\Gzero(N)\#E).
  \]
\end{proposition}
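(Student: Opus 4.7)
The plan is to reduce the proposition to a standard splitting argument via the Puppe sequence of the pulled-back sphere bundle. Starting from the diagram (\ref{ENdefeq}), I have an $S^{k-1}$-bundle $\namedright{E_N}{p_N}{N\# M}$ with fibre inclusion $j_N\colon S^{k-1}\to E_N$. Corollary \ref{ENcoro} supplies a diffeomorphism $E_N\cong \mathcal{G}^{\tau}_k(N)\#E$ for some framing $\tau$, and the second assertion of Lemma \ref{ENlemma} tells me that under this identification $j_N$ extends to an embedding $D^k\hookrightarrow \mathcal{G}^{\tau}_k(N)\#E$. In particular, $j_N$ is null homotopic. For the alternative statement ($k=2$, $n\geq 4$, $E$ simply connected, $M$ non-spin), I would instead invoke Lemma \ref{duanthmb} for the diffeomorphism, and verify null-homotopy of $j_N$ from a short long-exact-sequence chase: since $\pi_1(E)=0$, the connecting map $\pi_2(M)\to\pi_1(S^1)$ of the bundle $p$ is surjective, and under the pullback along the pinch map $q$ (which is surjective on $\pi_2$ for $n\geq 4$ because of the Mayer--Vietoris splitting $\pi_2(N\# M)\cong \pi_2(N)\oplus\pi_2(M)$), this forces $j_{N*}\colon\pi_1(S^1)\to \pi_1(E_N)$ to vanish; as $S^1$ is a $K(\mathbb Z,1)$, null-homotopy of $j_N$ follows.

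The homotopy-theoretic heart of the argument is the following general splitting principle: for any fibration $\namedddright{F}{i}{E}{p}{B}$ with $i$ null homotopic, one has $\Omega B\simeq F\times\Omega E$. The Puppe sequence supplies a fibration
\[
\nameddright{\Omega E}{\Omega p}{\Omega B}{\partial}{F}
\]
which is the pullback of the path-loop fibration $\Omega E\to PE\to E$ along $i$. Because the path-loop fibration is a principal $\Omega E$-fibration, so is its pullback. Choosing a null-homotopy $H\colon F\times I\to E$ with $H_0=\ast$, $H_1=i$, the formula $s(f)(t):= p(H(f,t))$ defines a section $s\colon F\to\Omega B$ of $\partial$. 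A principal bundle with a section is trivial, so $(f,\gamma)\mapsto s(f)\cdot\Omega p(\gamma)$ yields the desired homotopy equivalence $F\times\Omega E\simeq \Omega B$.

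Applying this principle to $\nameddright{S^{k-1}}{j_N}{E_N}{p_N}{N\# M}$ (in the first case) or to its analogue in the Duan case, and substituting $E_N\simeq \mathcal{G}^{\tau}_k(N)\# E$ (respectively $\mathcal{G}^0_2(N)\# E$), yields the two claimed homotopy equivalences. The step I would expect to require the most care is the verification, in the alternative case, that $j_N$ remains null-homotopic after the diffeomorphism of Lemma \ref{duanthmb}; however, the long-exact-sequence argument above sidesteps any need to track the embedding $D^2\hookrightarrow E_N$ through Duan's construction. Once null-homotopy of the fibre inclusion is in hand, the remainder is a purely formal consequence of the Puppe sequence combined with the principal structure on the path-loop fibration.
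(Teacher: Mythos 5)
Your proof is correct, and for the first case it is essentially identical to the paper's: use Lemma \ref{ENlemma} to deduce that $j_N$ extends over $D^k$ (hence is null homotopic), apply the standard fact that a fibration with null-homotopic fibre inclusion splits after looping, and then substitute the diffeomorphism $E_N\cong\mathcal G^\tau_k(N)\# E$. The paper takes the splitting principle as known; your explicit verification via the section $s(f)(t)=p(H(f,t))$ of $\partial\colon\Omega B\to F$ and the principal $\Omega E$-structure is a correct and welcome spelling-out of that step.

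For the Duan case your route differs slightly from the paper's, and two remarks are in order. First, the worry that you "must verify $j_N$ remains null-homotopic after the diffeomorphism of Lemma \ref{duanthmb}" is unfounded: null-homotopy of the fixed map $j_N\colon S^1\to E_N$ is a homotopy-invariant of $E_N$ and does not depend on which diffeomorphism you later use to identify $E_N$. The paper's simpler argument therefore goes through: $E$ simply connected forces $j\colon S^1\to E$ to be null homotopic, so Lemma~\ref{ENlemma} already yields an embedding $D^2\hookrightarrow E_N$ and hence $j_N$ null homotopic; the identification $E_N\cong\mathcal G^0_2(N)\# E$ from Lemma~\ref{duanthmb} is substituted only at the end. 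Second, your alternative LES argument does work, but the justification that $q_\ast\colon\pi_2(N\# M)\to\pi_2(M)$ is surjective "because of the Mayer--Vietoris splitting $\pi_2(N\# M)\cong\pi_2(N)\oplus\pi_2(M)$" is not quite right (there is no such splitting of $\pi_2$ in general for non-simply-connected summands). What you actually need, and what is true, is just surjectivity of $q_\ast$ on $\pi_2$, which follows immediately because the composite $M_0\hookrightarrow N\# M\xrightarrow{q} M$ is the inclusion $M_0\hookrightarrow M$, an $(n-1)$-connected map, and $n\geq 4$. With that correction your LES chase (using that the connecting map of the pulled-back bundle factors as $\delta_{E_N}=\delta_E\circ q_\ast$ and $\delta_E\colon\pi_2(M)\to\pi_1(S^1)$ is surjective since $\pi_1(E)=0$) is a valid, if somewhat longer, alternative to the paper's.
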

\begin{proof}
Consider the pullback of fibre bundles~(\ref{ENdefeq}). 
By either set of hypotheses, the fibre inclusion 
\(\namedright{S^{k-1}}{j}{E}\) 
is null homotopic so by the Whitney embedding theorem it 
can be extended to an embedding $D^k \hookrightarrow E$.
Then by Lemma \ref{ENlemma} the fibre inclusion $j_N: S^{k-1}\rightarrow E_N$ can be extended to an embedding $D^k\hookrightarrow E_N$. In particular, $j_N$ is null homotopic.  

Now consider the bundle 
\(\nameddright{S^{k-1}}{j_{N}}{E_{N}}{p_{N}}{M\# N}\). 
The null homotopy for $j_{N}$ implies that after looping there is a homotopy equivalence
$\Omega (N\# M)\simeq S^{k-1} \times \Omega E_N$. 
Substituting in the homotopy equivalence for $E_{N}$ in Corollary \ref{ENcoro} or Lemma \ref{duanthmb} in either case then completes the proof.
\end{proof}

\section{Proof of Theorem \ref{stabledecthm} and examples}
\label{sec: ex1}

In this section, we prove Theorem \ref{stabledecthm}, deduce the decompositions 
in Example \ref{ex1intro} (stated in Proposition~\ref{Pnsumprop}), and prove Corollary~\ref{rationalcase}. 

\begin{proof}[Proof of Theorem \ref{stabledecthm}]
Let us first prove the local homotopy equivalences. 
Let $\mathbb{F}=\mathbb{C}$ or $\mathbb{H}$, and take $k=2$ or $4$ correspondingly. 
For $m\geq 2$ in the complex or quaternionic case, let $\mathbb{F}P^{m}$ be the projective space obtained by identifying lines through the origin in $\mathbb{F}^{m}$.  
Recall that~$N$ is a closed $2n$-manifold such that $n$ satisfies the conditions in the theorem for each case. Then $2n=km$ for an integer $m\geq 2$. Consider the standard fibration $S^{k-1}\stackrel{j}{\longrightarrow} S^{km+k-1}\stackrel{p}{\longrightarrow} \mathbb{F}P^m$. 
Since $j$ is null homotopic, by Proposition~\ref{sumloopdecprop} there is a homotopy equivalence
\[
\Omega (N\# \mathbb{F}P^m)\simeq S^{k-1}\times \Omega (\Gtau(N)\#S^{km+k-1})\cong S^{k-1}\times \Omega \Gtau(N)
  \]
for some $\tau: S^{k-1}\longrightarrow O(n)$.
By Theorem \ref{gyrationtypeintro}, after localization away from $\mathcal{P}_k$, there is a homotopy equivalence
\begin{equation}\label{thm1pfeq3}
\Omega\Gtau(N)\simeq \Omega N_0 \times \Omega \Sigma^k F,
\end{equation}
where $F$ is the homotopy fibre of the inclusion
 \(\namedright{S^{2n-1}}{i}{N_0}\) of the boundary.
Combining, we obtain a homotopy equivalence
\[
\Omega (N\# \mathbb{F}P^m)\simeq S^{k-1}\times \Omega N_0 \times \Omega \Sigma^k F
\]
after localization away from $\mathcal{P}_k$. 

We wish to explicitly identify $\mathcal{P}_{k}$. By the definition of $\mathcal{P}_k$ in~(\ref{pkdefeq}) we have $\mathcal{P}_2=\{2\}$ while $\mathcal{P}_{4}$ consists of those primes that divide the denominators of $B_{1}/4$, where $B_{1}$ is the first Bernoulli number. As $B_{1}=6$ we obtain $\mathcal{P}_4=\{2,3 \}$. This completes the proof of the local homotopy equivalences.

To show the integral homotopy equivalence when $k=2$ and $n$ is even, notice that $\mathbb{C}P^n$ is non-spin. Hence by Proposition \ref{sumloopdecprop} there is a homotopy equivalence
\[
\Omega (N\# \mathbb{C}P^n)\simeq S^{1}\times \Omega (\Gzero(N)\#S^{2n+1})\cong S^{1}\times \Omega \Gzero(N).
  \]
By Theorem \ref{gyrationtypeintro} the equivalence (\ref{thm1pfeq3}) holds integrally when $\tau=0$.
Combining, we obtain a homotopy equivalence
\[
\Omega (N\# \mathbb{C}P^n)\simeq S^{1}\times \Omega N_0 \times \Omega \Sigma^2 F.
\]
This completes the proof of the theorem. 
\end{proof}
 
\begin{proposition}\label{Pnsumprop}
For $n\geq 2$, there are homotopy equivalences
\begin{itemize}
\item $\Omega (\mathbb{C}P^{2n}\#\mathbb{C}P^{2n})\simeq S^1\times S^1\times \Omega S^3\times \Omega S^{4n-1}$;
\item $\Omega (\mathbb{C}P^{2n+1}\#\mathbb{C}P^{2n+1})\simeq S^1\times S^1\times \Omega S^3\times \Omega S^{4n+1}$ after localization away from $2$;
\item $\Omega (\mathbb{H}P^{n}\#\mathbb{H}P^{n})\simeq S^3\times S^3\times \Omega S^7\times \Omega S^{4n-1}$ after localization away from $2$ and $3$;
\item $\Omega (\mathbb{C}P^{2n}\#\mathbb{H}P^{n})\simeq S^1\times S^3\times \Omega S^5\times \Omega S^{4n-1}$;
\item $\Omega (\mathbb{C}P^{8}\#\mathbb{O}P^{2})\simeq S^1\times S^7\times \Omega S^9\times \Omega S^{15}$;
\item $\Omega (\mathbb{H}P^{4}\#\mathbb{O}P^{2})\simeq S^3\times S^7\times \Omega S^{11}\times \Omega S^{15}$ after localization away from $2$ and $3$. 
\end{itemize}
\end{proposition}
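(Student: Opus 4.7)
The plan is to apply Theorem~\ref{stabledecthm} to each connected sum and then simplify using classical splittings of loop spaces of the relevant projective spaces and spheres. The key preparatory identifications are: for $\mathbb{F}P^m$ with $\mathbb{F}\in\{\mathbb{C},\mathbb{H},\mathbb{O}\}$ and $k=\dim_{\mathbb{R}}\mathbb{F}$, the deleted manifold $(\mathbb{F}P^m)_0$ is homotopy equivalent to $\mathbb{F}P^{m-1}$ (with the convention $\mathbb{O}P^{1}=S^{8}$), and the inclusion of the boundary $S^{km-1}\hookrightarrow \mathbb{F}P^{m-1}$ is the corresponding Hopf fibration with fibre $S^{k-1}$. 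Consequently, whenever $N=\mathbb{F}P^m$ is used as the input of Theorem~\ref{stabledecthm}, the homotopy fibre $F$ appearing there is $S^{k-1}$.

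For each connected sum in the statement I will select which factor plays the role of $N$ and which plays $M=\mathbb{F}P^m$. For the three integral statements ($\mathbb{C}P^{2n}\#\mathbb{C}P^{2n}$, $\mathbb{C}P^{2n}\#\mathbb{H}P^{n}$, and $\mathbb{C}P^{8}\#\mathbb{O}P^{2}$) I will take $M$ to be $\mathbb{C}P^{2n}$ or $\mathbb{C}P^{8}$, which is non-spin because its complex dimension is even, so that the integral clause of Theorem~\ref{stabledecthm} applies. For the four localized statements the labelling is symmetric or forced by the available clause of the theorem.

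Applying Theorem~\ref{stabledecthm} produces, with the appropriate localization, a homotopy equivalence $\Omega(N\#M)\simeq S^{k-1}\times\Omega N_0\times\Omega\Sigma^{k}F$, where $N_0$ and $F$ are as identified above. It remains to split $\Omega N_0$. For $N_{0}\simeq \mathbb{C}P^{m-1}$ or $\mathbb{H}P^{n-1}$, the principal Hopf bundles $S^{1}\to S^{2m-1}\to \mathbb{C}P^{m-1}$ and $S^{3}\to S^{4n-1}\to \mathbb{H}P^{n-1}$, extended one step to the left, give fibration sequences whose fibre maps $S^{1}\to S^{2m-1}$ (for $m\geq 2$) and $S^{3}\to S^{4n-1}$ (for $n\geq 2$) are null-homotopic for dimensional reasons; this yields integral splittings $\Omega\mathbb{C}P^{m-1}\simeq S^{1}\times\Omega S^{2m-1}$ and $\Omega\mathbb{H}P^{n-1}\simeq S^{3}\times\Omega S^{4n-1}$. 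For $N_{0}\simeq S^{8}$, the same argument applied to the octonionic Hopf bundle $S^{7}\to S^{15}\to S^{8}$, together with the null-homotopy of $S^{7}\to S^{15}$, delivers $\Omega S^{8}\simeq S^{7}\times\Omega S^{15}$. Combining the resulting splittings produces the seven stated equivalences by a routine substitution.

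The only delicate point is this last splitting $\Omega S^{8}\simeq S^{7}\times\Omega S^{15}$. Its integral validity, needed only for the $\mathbb{C}P^{8}\#\mathbb{O}P^{2}$ equivalence, rests on the existence of the octonionic Hopf bundle, that is, on Adams's Hopf invariant one theorem in dimension $8$. In the localized cases involving $\mathbb{O}P^{2}$ the splitting holds by the same Hopf-bundle argument, and in all other cases only the integral splittings for $\mathbb{C}P^{m-1}$ and $\mathbb{H}P^{n-1}$ are needed, so once the correct assignment of $N$ and $M$ is fixed in each case the remainder of the proof is purely mechanical.
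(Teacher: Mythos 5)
Your proposal is correct and follows essentially the same route as the paper: apply Theorem~\ref{stabledecthm}, identify $(\mathbb{F}P^m)_0\simeq\mathbb{F}P^{m-1}$ with $F\simeq S^{k-1}$ via the Hopf fibration, and then split $\Omega\mathbb{F}P^{m-1}\simeq S^{k-1}\times\Omega S^{km-1}$. The paper only writes out the four self--connected-sum cases and asserts the mixed ones are similar; you supply the missing details, in particular the observation that taking $M=\mathbb{C}P^{2n}$ or $\mathbb{C}P^{8}$ (non-spin) invokes the integral clause of the theorem, which the paper's own proof leaves implicit.
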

\begin{proof}
Let us only prove the first three homotopy equivalences after localization away from $\mathcal{P}_{k}$, as the rest can be shown by a similar argument. 
We adopt the notation in the proof of Theorem \ref{stabledecthm}. For $N=\mathbb{F}P^m$, we have $N_0=\mathbb{F}P^{m-1}$ and $F\simeq S^{k-1}$ by the standard fibration $S^{k-1}\stackrel{}{\longrightarrow} S^{km-1} \stackrel{i}{\longrightarrow} \mathbb{F}P^{m-1}$.
Hence by Theorem~\ref{stabledecthm}, after localization away from $\mathcal{P}_{k}$, there is a homotopy equivalence 
\[
\Omega (\mathbb{F}P^m\# \mathbb{F}P^m)\simeq S^{k-1}\times \Omega \mathbb{F}P^{m-1} \times \Omega S^{2k-1}.
\]
By the well known homotopy equivalence $\Omega \mathbb{F}P^{m-1} \simeq S^{k-1}\times \Omega S^{km-1}$, it follows that
\[
\Omega (\mathbb{F}P^m\# \mathbb{F}P^m)\simeq S^{k-1}\times S^{k-1}\times \Omega S^{km-1} \times \Omega S^{2k-1}.
\]
\end{proof}

The proof of Corollary~\ref{rationalcase} requires a preliminary result, which will also be 
used again in Section~\ref{sec: fibattach}. Recall that there is a homotopy cofibration 
\(\nameddright{S^{n-1}}{i}{N_0}{h}{N}\). 

\begin{lemma}\label{fibrehlemma}
If the map 
   \(\namedright{\Omega N_0}{\Omega h}{\Omega N}\) 
   has a right homotopy inverse, then there is a homotopy fibration 
   \[
   \Sigma^{n-1}\Omega N \vee S^{n-1}\longrightarrow N_0\stackrel{h}{\longrightarrow} N, 
   \]
which splits after looping to give a homotopy equivalence
\[
\Omega N_0 \simeq  \Omega N \times \Omega ( \Sigma^{n-1}\Omega N \vee S^{n-1}).
\]
Moreover, the composition $S^{n-1}\stackrel{i_2}{\longrightarrow}\Sigma^{n-1}\Omega N \vee S^{n-1}\longrightarrow N_0$ is homotopic to $i$, where $i_2$ is the inclusion of the second wedge summand.
\end{lemma}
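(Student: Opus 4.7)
The plan is to apply Mather's cube lemma to the homotopy pushout that attaches the top cell of $N$ to $N_{0}$, use the hypothesis on $\Omega h$ to force the resulting connecting map to vanish up to homotopy, and invoke the standard unstable splitting of a half-smash of a sphere.

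Set $F=\mathrm{hofib}(h)$. The attachment of the top cell gives a homotopy pushout
\[
\xymatrix{S^{n-1}\ar[r]^-{i}\ar[d] & N_{0}\ar[d]^{h} \\ \ast\ar[r] & N.}
\]
Taking homotopy fibres at each corner of the canonical maps to $N$, the side faces are automatically homotopy pullbacks, and Mather's cube lemma produces a homotopy pushout on the top face
\[
\xymatrix{S^{n-1}\times\Omega N\ar[r]^-{\pi_{2}}\ar[d]_-{\phi} & \Omega N\ar[d] \\ F\ar[r] & \ast.}
\]
The identification $\mathrm{hofib}(h\circ i)\simeq S^{n-1}\times\Omega N$ uses the null-homotopy of $h\circ i$ coming from the cofibration sequence, and the restriction $\phi|_{\ast\times\Omega N}$ is precisely the connecting map $\partial\colon\Omega N\to F$ of the fibration $F\to N_{0}\to N$.

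The hypothesis that $\Omega h$ admits a right homotopy inverse splits the looped fibration $\Omega F\to\Omega N_{0}\to\Omega N$, which in turn forces $\partial\simeq\ast$. Adjusting $\phi$ via a null-homotopy, it factors through the left half-smash $\Omega N\ltimes S^{n-1}=(S^{n-1}\times\Omega N)/(\ast\times\Omega N)$ as a map $\bar\phi\colon\Omega N\ltimes S^{n-1}\to F$. The Puppe cofibration sequence associated to the Mather pushout being contractible gives $\Sigma(\Omega N\vee F)\simeq\Sigma(S^{n-1}\times\Omega N)$; combining this with the suspension splitting $\Sigma(S^{n-1}\times\Omega N)\simeq\Sigma S^{n-1}\vee\Sigma\Omega N\vee\Sigma(S^{n-1}\wedge\Omega N)$ and cancelling the common $\Sigma\Omega N$ summand (which is realised by $\Sigma\pi_{2}$ being the identity on the $\Omega N$ factor) yields $\Sigma F\simeq\Sigma(S^{n-1}\vee\Sigma^{n-1}\Omega N)\simeq\Sigma(\Omega N\ltimes S^{n-1})$, realised by $\Sigma\bar\phi$. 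Under the connectivity available in the applications (both source and target being simply connected), Whitehead's theorem lifts this to an unstable equivalence $\bar\phi$; alternatively one may build $\bar\phi$ directly from the canonical lift $\tilde i\colon S^{n-1}\to F$ of $i$ and the principal $\Omega N$-action on $F$, verifying that it is an equivalence via the homotopy long exact sequence.

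Finally, $\Omega N\ltimes S^{n-1}$ admits the standard unstable splitting $\Omega N\ltimes S^{n-1}\simeq S^{n-1}\vee(\Omega N\wedge S^{n-1})=S^{n-1}\vee\Sigma^{n-1}\Omega N$ via the retraction $\Omega N\ltimes S^{n-1}\to S^{n-1}$ induced by the first projection. Composing with $\bar\phi$ produces the claimed homotopy fibration $\Sigma^{n-1}\Omega N\vee S^{n-1}\to N_{0}\stackrel{h}{\to}N$. The composite $S^{n-1}\hookrightarrow\Sigma^{n-1}\Omega N\vee S^{n-1}\to N_{0}$ coincides with $i$ since $\bar\phi$ restricts on the $S^{n-1}$-summand to $\tilde i$ and $F\to N_{0}$ sends $\tilde i$ to $i$ by construction. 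The looped splitting $\Omega N_{0}\simeq\Omega N\times\Omega(\Sigma^{n-1}\Omega N\vee S^{n-1})$ then follows by splitting this fibration after looping with the right homotopy inverse of $\Omega h$. The main obstacle I anticipate is the passage from $\Sigma\bar\phi$ being an equivalence to $\bar\phi$ being an unstable equivalence; this demands either a Whitehead-style desuspension argument under the connectivity assumptions of the applications, or an explicit $\Omega N$-equivariant construction producing an inverse.
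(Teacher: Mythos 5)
The paper's own ``proof'' of this lemma is a one-line citation of \cite[Proposition~3.5]{BT2}, so there is nothing in the paper itself to compare against step-by-step; your proposal is a from-scratch reconstruction via Mather's cube, which is in the same spirit as the Beben--Theriault argument but is a genuinely independent route as far as this paper is concerned. The Mather cube setup, the identification $\mathrm{hofib}(h\circ i)\simeq S^{n-1}\times\Omega N$, the observation that $\phi|_{\ast\times\Omega N}$ is the connecting map $\partial$, and the deduction $\partial\simeq\ast$ from the right homotopy inverse of $\Omega h$ (since $\partial\circ\Omega h\simeq\ast$ and hence $\partial\simeq\partial\circ\Omega h\circ r\simeq\ast$) are all correct, as is the ``moreover'' clause about $i$. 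One small notational clash: you write $F$ for $\mathrm{hofib}(h)$, whereas the paper reserves $F$ for $\mathrm{hofib}(i\colon S^{n-1}\to N_0)$; worth avoiding confusion if this is spliced into the paper.

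Two points need tightening. First, ``cancelling the common $\Sigma\Omega N$ summand'' is not a valid move in general (wedge decompositions need not cancel); what saves you is that the Mayer--Vietoris equivalence $\Sigma(S^{n-1}\times\Omega N)\to\Sigma\,\mathrm{hofib}(h)\vee\Sigma\Omega N$ is explicitly $(\Sigma\phi,\Sigma\pi_2)$, and its restriction to the $\Sigma\Omega N$ summand is $(\ast,\mathrm{id})$ because $\phi\circ i_2\simeq\partial\simeq\ast$ and $\pi_2\circ i_2=\mathrm{id}$. Quotienting both sides by $\Sigma\Omega N$ then genuinely does yield $\Sigma\bar\phi\colon\Sigma(\Omega N\ltimes S^{n-1})\to\Sigma\,\mathrm{hofib}(h)$ as an equivalence, or equivalently one can run Mayer--Vietoris in integral homology directly and read off that $\bar\phi$ is a homology isomorphism. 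Second, the desuspension step you flag is indeed the crux, but the Whitehead fix does go through here: since $n\geq 3$, the cofibration $S^{n-1}\to N_0\to N$ attaches a cell of dimension at least $3$, so $h_*\colon\pi_1(N_0)\to\pi_1(N)$ is an isomorphism; combined with the right-inverse hypothesis (which makes $h_*\colon\pi_2(N_0)\to\pi_2(N)$ surjective and hence kills the boundary $\pi_2(N)\to\pi_1(\mathrm{hofib}\,h)$), the long exact sequence gives $\pi_1(\mathrm{hofib}\,h)=0$, while $\Omega N\ltimes S^{n-1}\simeq S^{n-1}\vee\Sigma^{n-1}\Omega N$ is also simply connected. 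Both spaces have the homotopy type of CW complexes, so the homology isomorphism $\bar\phi$ is a homotopy equivalence. With those two repairs your argument is a correct proof of the lemma, unlike the paper which simply defers to \cite{BT2}.
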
 
\begin{proof}
The lemma is an immediate application of~\cite[Proposition 3.5]{BT2} to the homotopy cofibration \(\nameddright{S^{n-1}}{i}{N_0}{h}{N}\). 
\end{proof}

\begin{proof}[Proof of Corollary~\ref{rationalcase}]  
The hypotheses on $N$ allow us to apply~\cite[Th\'{e}or\`{e}me 5.1]{HaL}, which states that the map 
\(\namedright{N_{0}}{h}{N}\) 
induces a surjection on rational homotopy groups. Consequently, as the loop space of any 
simply-connnected rational space is homotopy equivalent to a product of Eilenberg-MacLane 
spaces, the map 
\(\namedright{\Omega N_{0}}{\Omega h}{\Omega N}\) 
has a right homotopy inverse rationally. The existence of this right homotopy inverse lets us 
apply Lemma~\ref{fibrehlemma} in the rational setting, implying that there is a rational homotopy fibration 
\[\nameddright{\Sigma^{2n-1}\Omega N\vee S^{2n-1}}{}{N_{0}}{}{N}\] 
that splits after looping to give a rational homotopy equivalence 
\[\Omega N_{0}\simeq\Omega N\times\Omega(\Sigma^{2n-1}\Omega N\vee S^{2n-1}).\] 

Substituting this into the rational homotopy equivalence 
$\Omega(N\conn\mathbb{C}P^{n})\simeq S^{1}\times\Omega N_{0}\times\Omega\Sigma^{2} F$ 
from Theorem~\ref{stabledecthm} then proves the asserted homotopy equivalence for 
$\Omega(N\conn\mathbb{C}P^{n})$. The homotopy equivalence for 
$\Omega(N\conn\mathbb{H}P^{\frac{n}{2}})$ 
is obtained similarly. 
\end{proof}


\section{The homotopy fibre of certain attaching maps} 
\label{sec: fibattach} 
To take Theorem~\ref{gyrationtypeintro} or Theorem \ref{stabledecthm} further, the space $F$ should be better identified. Consider again the homotopy cofibration 
\(\nameddright{S^{n-1}}{i}{N_0}{h}{N}\). 
By definition, $F$ is the homotopy fibre of the inclusion $i$ of the boundary. Equivalently, it is the homotopy fibre of the attaching map for the top cell of $N$. In this section we study the homotopy type of $F$ under the condition that $\Omega h$ admits a right homotopy inverse, in which case Lemma~\ref{fibrehlemma} holds. 

Assume that the map 
\(\namedright{N_{0}}{h}{N}\) 
has the property that $\Omega h$ has a right homotopy inverse. By Lemma~\ref{fibrehlemma}, 
there is a homotopy fibration 
\(\nameddright{\Sigma^{n-1}\Omega N\vee S^{n-1}}{}{N_{0}}{h}{N}\) 
and the composite 
\(S^{n-1}\stackrel{i_2}{\longrightarrow}\Sigma^{n-1}\Omega N \vee S^{n-1}\longrightarrow N_0\) 
is homotopic to $i$. From the factorization of $i$ we obtain a homotopy fibration diagram 
\begin{equation} 
  \label{fgdgrm} 
  \diagram 
       & \Omega N_0\rdouble\dto^{\partial} &  \Omega N_0\dto^{\Omega h} \\ 
       Z\rto^-{\rho}\ddouble & F\rto^-{\kappa}\dto & \Omega N\dto \\ 
       Z\rto & S^{n-1}\rto^-{i_2}\dto^{i} &\Sigma^{n-1}\Omega N \vee S^{n-1}\dto \\ 
       &  N_0\rdouble & N_0 
  \enddiagram 
\end{equation} 
that defines the space $Z$ and the maps $\kappa$, $\rho$ and $\partial$. 

\begin{lemma} 
   \label{Ftype} 
   If the map 
   \(\namedright{\Omega N_0}{\Omega h}{\Omega N}\) 
   has a right homotopy inverse, then there is a homotopy equivalence 
   $F\simeq\Omega N\times Z$. 
\end{lemma}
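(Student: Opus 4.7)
The plan is to first produce a right homotopy inverse $\sigma$ to $\kappa \colon F \to \Omega N$ out of the hypothesised right homotopy inverse to $\Omega h$, and then upgrade $\sigma$ to a product decomposition by exploiting the principal action of $\Omega N_0$ on $F$ coming from the fibration $F \to S^{n-1} \xrightarrow{i} N_0$.

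First I would extract from diagram~(\ref{fgdgrm}) the homotopy $\kappa \circ \partial \simeq \Omega h$, which records the commutativity of the top square formed by the two right-most columns. Letting $s \colon \Omega N \to \Omega N_0$ denote the right homotopy inverse of $\Omega h$ supplied by the hypothesis, the composite $\sigma := \partial \circ s$ then satisfies $\kappa \circ \sigma \simeq \Omega h \circ s \simeq \mathrm{id}_{\Omega N}$, providing a right homotopy inverse to $\kappa$.

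Second, the homotopy fibration $F \to S^{n-1} \xrightarrow{i} N_0$ endows $F$ with a principal action $\mu \colon \Omega N_0 \times F \to F$ whose restriction to the basepoint of $F$ is homotopic to $\partial$. Using $s$ and $\rho$, I define
\[
\phi \colon Z \times \Omega N \longrightarrow F, \qquad \phi(z, \omega) := \mu(s(\omega), \rho(z)),
\]
which by the normalisation of $\mu$ restricts to $\rho$ on $Z \times \{*\}$ and to $\sigma$ on $\{*\} \times \Omega N$. Because the middle and right columns of~(\ref{fgdgrm}) form a morphism of principal fibrations over the common base $N_0$, the induced map $\kappa$ intertwines $\mu$ with the H-space structure on $\Omega N$ precomposed with $\Omega h$. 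This yields $\kappa \circ \phi \simeq \pi_2$, so that $\phi$ becomes a morphism from the product fibration $Z \to Z \times \Omega N \xrightarrow{\pi_2} \Omega N$ to $Z \xrightarrow{\rho} F \xrightarrow{\kappa} \Omega N$ inducing the identity on both fibre and base.

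The five lemma applied to the long exact sequences of homotopy groups then shows $\phi$ is a weak equivalence, which is a genuine homotopy equivalence since all spaces have the homotopy type of CW complexes. I expect the main obstacle to be pinning down the equivariance $\kappa \circ \phi \simeq \pi_2$: it reduces to the claim that the morphism of principal fibrations in~(\ref{fgdgrm}) intertwines the monodromy actions of $\Omega N_0$ on the respective fibres $F$ and $\Omega N$, which follows formally from naturality but must be made compatible with the particular H-space structure on $\Omega N$ used to compute $\mu(s(\omega), \rho(z))$ through $\kappa$.
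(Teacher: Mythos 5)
Your proposal is correct and essentially matches the paper's argument: both build the same comparison map $\Omega N\times Z\to F$ from the canonical homotopy action of $\Omega N_0$ on $F$, the right homotopy inverse of $\Omega h$, and the fibre inclusion $\rho$. The only difference is how the map is verified to be an equivalence --- the paper checks that both squares in its diagram~(\ref{actiondgrm}) are homotopy pullbacks with the right column an equivalence, you establish a morphism of fibrations and invoke the five lemma --- and both hinge on the same inputs, namely naturality of the action for the fibration morphism in~(\ref{fgdgrm}) and the null homotopy of $\kappa\circ\rho$.
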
 

\begin{proof} 
In general, a homotopy fibration 
\(\nameddright{R}{}{S}{}{T}\) 
has a connecting map 
\(\partial\colon\namedright{\Omega T}{}{R}\) 
and a canonical homotopy action  
\[\theta_{R}\colon\namedright{\Omega T\times R}{}{R}\] 
that extends the wedge sum  
\(\namedright{\Omega T\vee R}{}{R}\) 
of $\partial$ and the identity map on $R$. Moreover, this action is natural for maps 
of homotopy fibrations. 

Let 
\(r\colon\namedright{\Omega N}{}{\Omega N_0}\) 
be a right homotopy inverse for $\Omega h$. Consider the diagram 
\begin{equation} 
  \label{actiondgrm} 
  \diagram 
     \Omega N\times Z\rto^-{\pi_{1}}\dto^{r\times \rho} 
           & \Omega N\dto^{i_{1}\circ r} \\ 
     \Omega N_0\times F\rto^-{1\times \kappa}\dto^{\theta_{F}} 
           & \Omega N_0\times\Omega N\dto^{\theta_{\Omega N}} \\ 
     F\rto^-{s} & \Omega N  
  \enddiagram 
\end{equation} 
where $\pi_{1}$  is the projection onto the first factor, $i_{1}$ is the inclusion into the 
first factor, and $\theta_{F}$ and $\theta_{\Omega N}$ are the homotopy actions associated to the 
homotopy fibrations
\(\nameddright{F}{}{S^{n-1}}{i}{N_0}\) 
and 
\(\nameddright{\Omega N}{}{\Sigma^{n-1}\Omega N \vee S^{n-1}}{}{N_0}\) 
respectively. By~(\ref{fgdgrm}), there is a morphism between these two homotopy fibrations 
so the lower square in~(\ref{actiondgrm}) homotopy commutes by the naturality of the 
homotopy action. The upper square in~(\ref{actiondgrm}) homotopy commutes since 
$\kappa\circ \rho$ is the composite of two consecutive maps in a homotopy fibration 
and so is null homotopic. Observe that both squares in~(\ref{actiondgrm}) are in 
fact homotopy pullbacks. Therefore, the outer rectangle is also a homotopy pullback. 
Since the composite $\theta_{\Omega N}\circ i_{1}\circ r$ in the right column of~(\ref{actiondgrm}) 
is homotopic to the identity map, the fact that the outer rectangle in the diagram is a 
homotopy pullback implies that $\theta_{F}\circ(r\times \rho)$ is a homotopy equivalence.  
\end{proof} 

This can be taken further by identifying the homotopy type of $Z$. 

\begin{lemma} 
   \label{Zlemma} 
There is a homotopy equivalence
   \[
   Z\simeq\Omega\big((\Omega S^{n-1}\wedge \Sigma^{n-1}\Omega N)\vee \Sigma^{n-1}\Omega N\big) \simeq \Omega (\bigvee_{\nu=1}^{\infty} \Sigma^{(n-2)\nu+1} \Omega N ).
   \] 
\end{lemma}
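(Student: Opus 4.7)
The plan is to identify $Z$ from diagram~(\ref{fgdgrm}) as the homotopy fibre of the inclusion $i_2\colon S^{n-1}\to \Sigma^{n-1}\Omega N \vee S^{n-1}$ of the second wedge summand, and then compute this fibre by exploiting the fact that $i_2$ is a section of the pinch map. Write $A=\Sigma^{n-1}\Omega N$ for brevity and let $p\colon A\vee S^{n-1}\to S^{n-1}$ be the pinch map that collapses $A$; then $p\circ i_2=\mathrm{id}$, so $i_2$ is a section of $p$. Looping the fibre sequence $Z\to S^{n-1}\to A\vee S^{n-1}$ turns $\Omega i_2$ into a section of the loop fibration $\Omega F_p\to \Omega(A\vee S^{n-1})\to \Omega S^{n-1}$ (where $F_p$ denotes the homotopy fibre of $p$); since loop fibrations with sections split as H-spaces, the homotopy fibre of $\Omega i_2$ computes as $\Omega^2 F_p$, and comparing connectivities yields $Z\simeq \Omega F_p$.

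I next compute $F_p$ using its pathspace pullback model. Modelling $F_p$ as the homotopy pullback of $p$ along $PS^{n-1}\to S^{n-1}$ and splitting the source into its $A$ and $S^{n-1}$ pieces produces a pushout of $PS^{n-1}$ (contractible) glued to $A\times \Omega S^{n-1}$ along $\{\ast\}\times \Omega S^{n-1}$. Contracting $PS^{n-1}$ to a point yields
\[
F_p\simeq (A\times \Omega S^{n-1})/(\{\ast\}\times \Omega S^{n-1}),
\]
which in the paper's notation is the left half-smash $\Omega S^{n-1}\ltimes A$. Since $A=\Sigma(\Sigma^{n-2}\Omega N)$ is a suspension, the standard splitting $X\ltimes Y\simeq Y\vee (X\wedge Y)$---which follows from $X\ltimes Y=X_+\wedge Y$ together with $\Sigma X_+\simeq \Sigma X\vee S^1$---gives $F_p\simeq A\vee (\Omega S^{n-1}\wedge A)$. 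Combining with Step~1 establishes the first equivalence
\[
Z\simeq \Omega F_p\simeq \Omega\big((\Omega S^{n-1}\wedge \Sigma^{n-1}\Omega N)\vee \Sigma^{n-1}\Omega N\big).
\]

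For the second equivalence, I invoke James' splitting $\Sigma\Omega S^{n-1}\simeq \bigvee_{k\geq 1}S^{k(n-2)+1}$. Using that $\Sigma^{n-1}\Omega N$ is a suspension,
\[
\Omega S^{n-1}\wedge \Sigma^{n-1}\Omega N\simeq \Sigma\Omega S^{n-1}\wedge \Sigma^{n-2}\Omega N\simeq \bigvee_{k\geq 1}\Sigma^{(k+1)(n-2)+1}\Omega N=\bigvee_{\nu\geq 2}\Sigma^{(n-2)\nu+1}\Omega N,
\]
and together with $\Sigma^{n-1}\Omega N=\Sigma^{(n-2)\cdot 1+1}\Omega N$ (the $\nu=1$ summand) this gives $(\Omega S^{n-1}\wedge \Sigma^{n-1}\Omega N)\vee \Sigma^{n-1}\Omega N\simeq \bigvee_{\nu\geq 1}\Sigma^{(n-2)\nu+1}\Omega N$. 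Applying $\Omega$ produces the second equivalence.

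The main obstacle is the step $Z\simeq \Omega F_p$: the long exact sequence of homotopy groups delivers $\pi_\ast(Z)\cong \pi_\ast(\Omega F_p)$ immediately, but upgrading this to a genuine homotopy equivalence requires either the splitting-of-section argument sketched above or a Mather cube applied to the pushout that defines $A\vee S^{n-1}$. Care must also be taken with connectivity, but this is automatic since $\Sigma^{n-1}\Omega N$ is $(n-1)$-connected and $n\geq 2$, so $Z$ and $\Omega F_p$ are both simply connected and the $\pi_\ast$-isomorphism lifts to an equivalence.
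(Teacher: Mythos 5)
Your overall strategy is the same as the paper's — identify $Z$ as the homotopy fibre of $i_2\colon S^{n-1}\to \Sigma^{n-1}\Omega N\vee S^{n-1}$, relate it to the loops on the fibre of the pinch map, and finish with the James splitting — and your computation of $F_p$ via the path-space pullback over the wedge pushout, and the subsequent arithmetic with James' theorem, are both correct. The paper simply cites the formula for the fibre of a pinch map as well known, so your derivation of it is a welcome elaboration.

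However, there is a genuine gap in your reduction $Z\simeq\Omega F_p$. What your H-space splitting argument actually delivers is that $\Omega Z$, the homotopy fibre of $\Omega i_2$, is equivalent to $\Omega^2 F_p$. To pass from $\Omega Z\simeq\Omega^2 F_p$ to $Z\simeq\Omega F_p$ you cannot argue by ``comparing connectivities'' or by saying ``the $\pi_\ast$-isomorphism lifts to an equivalence'' — Whitehead's theorem needs a \emph{map} inducing the isomorphism on homotopy groups, and a post-hoc abstract isomorphism between simply connected spaces does not produce one. The paper sidesteps this entirely and never loops: since $q_2\circ i_2=\mathrm{id}$, the commuting square
\[
\begin{array}{ccc}
S^{n-1} & \stackrel{i_2}{\longrightarrow} & \Sigma^{n-1}\Omega N\vee S^{n-1}\\
\downarrow{\scriptstyle=} & & \downarrow{\scriptstyle q_2}\\
S^{n-1} & = & S^{n-1}
\end{array}
\]
is a morphism of fibrations covering the identity on the base. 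The induced square on fibres,
$\ast\to V$ over $S^{n-1}\to \Sigma^{n-1}\Omega N\vee S^{n-1}$, is therefore a homotopy pullback, which directly identifies the homotopy fibre $Z$ of $i_2$ with the homotopy fibre $\Omega V=\Omega F_p$ of $\ast\to V$, with no delooping required. I suggest replacing your Step 1 with this pullback argument; after that your computation of $F_p\simeq \Omega S^{n-1}\ltimes \Sigma^{n-1}\Omega N$ and the James-splitting identification go through as written.
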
 

\begin{proof} 
Consider the homotopy fibration diagram 
\[\diagram 
       \Omega V\rto\ddouble & \ast\rto\dto & V\dto \\ 
       \Omega V\rto & S^{n-1}\rto^-{i_2}\dto^{=} &\Sigma^{n-1}\Omega N \vee S^{n-1}\dto^{q_{2}} \\ 
       & S^{n-1}\rdouble & S^{n-1}, 
  \enddiagram\] 
where $q_2$ is the pinch map onto $S^{n-1}$ with homotopy fibre $V$. In particular, the homotopy fibre of~$i_2$ is homotopy 
equivalent to $\Omega V$. Thus $Z\simeq\Omega V$. 
On the other hand, it is well known that the homotopy fibre of the pinch map 
\(\namedright{\Sigma A\vee\Sigma B}{q_{2}}{\Sigma B}\) 
is homotopy equivalent to $\Sigma A\vee(\Sigma A\wedge\Omega\Sigma B)$. 
Further, by the James construction, there is a homotopy equivalence 
$\Sigma\Omega\Sigma B\simeq\bigvee_{\mu=1}^{\infty}\Sigma B^{\wedge\mu}$, 
where $B^{\wedge\mu}$ is the iterated smash product of $\nu$ copies of $B$. Thus there 
is a homotopy equivalence 
\[\Sigma A\vee(\Sigma A\wedge\Omega\Sigma B)\simeq 
      \Sigma A\vee\bigg(\bigvee_{\mu=1}^{\infty}\Sigma A\wedge B^{\wedge\mu}\bigg).\] 
In our case, we obtain that the homotopy fibre of 
\(\namedright{\Sigma^{n-1}\Omega N\vee S^{n-1}}{q_{2}}{S^{n-1}}\) 
is homotopy equivalent to 
\[\Sigma^{n-1}\Omega N\vee\bigg(\bigvee_{\mu=1}^{\infty}\Sigma^{n-1}\Omega N\wedge (S^{n-2})^{\wedge\mu}\bigg) 
     \simeq\bigvee_{\nu=1}^{\infty}\Sigma^{(n-2)\nu+1}\Omega N. \] 
Hence $Z\simeq\Omega(\bigvee_{\nu=1}^{\infty}\Sigma^{(n-2)\nu+1}\Omega N)$.
\end{proof} 

Combining Lemmas~\ref{Ftype} and~\ref{Zlemma} gives the following. 

\begin{proposition} 
   \label{ftype} 
Let \(\nameddright{S^{n-1}}{i}{N_0}{h}{N}\) be a homotopy cofibration such that $\Omega h$ admits a right homotopy inverse. Then there is a homotopy fibration 
   \[\nameddright{\Omega N\times \Omega (\bigvee_{\nu=1}^{\infty} \Sigma^{(n-2)\nu+1} \Omega N )}{}{S^{n-1}}{i}{N_0}.\] 
\end{proposition}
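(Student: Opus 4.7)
The plan is to simply assemble the three preceding lemmas. Let $F$ denote the homotopy fibre of the boundary inclusion $i\colon S^{n-1}\to N_0$, so that by definition there is a homotopy fibration
\[
\namedddright{F}{}{S^{n-1}}{i}{N_0}{h}{N}.
\]
It therefore suffices to identify the homotopy type of $F$ as the asserted product.

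First I invoke Lemma~\ref{fibrehlemma}, which uses the hypothesis that $\Omega h$ admits a right homotopy inverse to replace the cofibre sequence by a homotopy fibration $\Sigma^{n-1}\Omega N\vee S^{n-1}\to N_0\to N$, fitting together with $\nameddright{F}{}{S^{n-1}}{i}{N_0}$ into the homotopy fibration diagram (\ref{fgdgrm}). This diagram defines the auxiliary space $Z$ (the fibre of the comparison $i_2\colon S^{n-1}\to\Sigma^{n-1}\Omega N\vee S^{n-1}$) together with the map $\rho\colon Z\to F$.

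Next I apply Lemma~\ref{Ftype} to conclude that $F\simeq \Omega N\times Z$. Finally, Lemma~\ref{Zlemma} identifies
\[
Z\simeq \Omega\Bigl(\bigvee_{\nu=1}^{\infty}\Sigma^{(n-2)\nu+1}\Omega N\Bigr).
\]
Substituting this equivalence into $F\simeq\Omega N\times Z$ yields exactly the claimed homotopy fibration.

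There is no real obstacle: the entire statement is a formal consequence of the preceding three lemmas, all of whose hypotheses are subsumed by the single assumption that $\Omega h$ has a right homotopy inverse. The only care needed is to check that the homotopy equivalence $F\simeq\Omega N\times Z$ is compatible with the fibration structure, but this is immediate from the construction in Lemma~\ref{Ftype}, where the equivalence is produced by the homotopy action $\theta_F$ restricted along $r\times\rho$ and thus lives over the same base $S^{n-1}$.
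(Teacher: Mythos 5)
Your proposal is correct and matches the paper's own (extremely terse) proof, which simply says ``Combining Lemmas~\ref{Ftype} and~\ref{Zlemma} gives the following.'' You spell out the same assembly of Lemma~\ref{fibrehlemma}, Lemma~\ref{Ftype} and Lemma~\ref{Zlemma}, with no substantive difference in route.
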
 
\vspace{-1cm}~$\qqed$\bigskip 

Specific examples of Proposition~\ref{ftype} will be described in the next section.

\section{More Examples} 
\label{sec: ex2} 
Using Proposition \ref{ftype} in the last section, we refine Theorem~\ref{stabledecthm} in 
the case of $(n-1)$-connected $2n$-dimensional manifolds. For $\ell\geq 1$, let  $J_\ell:=\mathop{\bigvee}\limits_{i=1}^{\ell} S^n$. 

\begin{proposition}\label{n-12nmanifoldprop}
Let $N$ be an $(n-1)$-connected $2n$-dimensional closed manifold with the rank of $H^n(N;\mathbb{Z})$ equal to $d$ for some $d\geq 2$. Then
\begin{itemize} 
\item if $n\geq 6$ is even but $n\neq 8$, there is a homotopy equivalence
\[
\Omega (N\# \mathbb{C}P^n)\simeq S^{1}\times \Omega J_d\times 
      \Omega\Sigma^{2}(\Omega N\times \Omega (\bigvee_{\nu=1}^{\infty} \Sigma^{(2n-2)\nu+1} \Omega N ));
\] 
\item if $n\geq 3$ is odd, there is a homotopy equivalence after localization away from $2$ 
\[
\Omega (N\# \mathbb{C}P^n)\simeq S^{1}\times \Omega J_d\times 
      \Omega\Sigma^{2}(\Omega N\times \Omega (\bigvee_{\nu=1}^{\infty} \Sigma^{(2n-2)\nu+1} \Omega N ));
\]
\item if $n\geq 6$ is even but $n\neq 8$, there is a homotopy equivalence after localization away from $2$ and $3$
\[
\Omega (N\# \mathbb{H}P^{\frac{n}{2}})\simeq S^{3}\times \Omega J_d\times 
      \Omega\Sigma^{4}(\Omega N\times \Omega (\bigvee_{\nu=1}^{\infty} \Sigma^{(2n-2)\nu+1} \Omega N )). 
\]
\end{itemize}
Further, there is a homotopy equivalence
\[\Omega N\simeq\Omega (S^{n}\times S^{n})\times 
                     \Omega\left(J_{d-2}\vee (J_{d-2}\wedge \Omega (S^{n}\times S^{n}))\right).\] 
Consequently: 
   \begin{itemize} 
      \item[(i)] $\Sigma(\Omega N\times \Omega (\mathop{\bigvee}\limits_{\nu=1}^{\infty} \Sigma^{(2n-2)\nu+1} \Omega N ))$ is homotopy equivalent 
                     to a wedge of simply-connected spheres; 
      \item[(ii)] both $\Omega (N\# \mathbb{C}P^n)$ and $\Omega (N\# \mathbb{H}P^{\frac{n}{2}})$ are homotopy equivalent to products of loops on 
                     simply-connected spheres with $S^1$ and $S^3$ respectively.
   \end{itemize}
\end{proposition}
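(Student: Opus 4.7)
The plan is to combine Theorem~\ref{stabledecthm} with Proposition~\ref{ftype} and a Beben--Theriault decomposition of $\Omega N$, and then carry out a careful suspension-splitting analysis for parts~(i) and~(ii).

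First I would observe that since $N$ is an $(n-1)$-connected closed $2n$-manifold with $\mathrm{rank}\,H^n(N;\mathbb{Z})=d$, the standard minimal handle structure gives a homotopy equivalence $N_0\simeq J_d=\bigvee_{i=1}^d S^n$ and a homotopy cofibration $\nameddright{S^{2n-1}}{i}{J_d}{h}{N}$. Because $d\geq 2$, the results of Beben--Theriault cited in the introduction (\cite{BT1, BT2}) furnish a right homotopy inverse for $\Omega h$, so Proposition~\ref{ftype} applied with ambient dimension $2n$ gives
\[
F\simeq\Omega N\times\Omega\Bigl(\bigvee_{\nu=1}^{\infty}\Sigma^{(2n-2)\nu+1}\Omega N\Bigr).
\]
Substituting $N_0\simeq J_d$ and this formula for $F$ into Theorem~\ref{stabledecthm} immediately yields the three displayed decompositions of $\Omega(N\#\mathbb{C}P^n)$ and $\Omega(N\#\mathbb{H}P^{n/2})$. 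The ``Further'' decomposition of $\Omega N$ is then the content of the Beben--Theriault splitting theorem for $(n-1)$-connected $2n$-manifolds under the stated dimension restrictions: a hyperbolic pair in the intersection form (guaranteed by $d\geq 2$) allows one to split off $\Omega(S^n\times S^n)$ after looping, and the complementary factor is identified with $\Omega(J_{d-2}\vee(J_{d-2}\wedge\Omega(S^n\times S^n)))$.

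For part~(i) I would apply two tools repeatedly: James's splitting $\Sigma\Omega\Sigma A\simeq\bigvee_{k\geq 1}\Sigma A^{\wedge k}$, which produces a wedge of simply-connected spheres whenever $A$ is such, and the product splitting $\Sigma(X\times Y)\simeq\Sigma X\vee\Sigma Y\vee\Sigma(X\wedge Y)$. The key intermediate step is to show that $\Sigma\Omega N$ is itself a wedge of simply-connected spheres. Setting $V=J_{d-2}\vee(J_{d-2}\wedge\Omega(S^n\times S^n))$, the Beben--Theriault decomposition reads $\Omega N\simeq\Omega(S^n\times S^n)\times\Omega V$. Applying the product splitting to $\Omega(S^n\times S^n)\simeq\Omega S^n\times\Omega S^n$ and then James to each factor shows $\Sigma\Omega(S^n\times S^n)$ is a wedge of spheres; smashing with $J_{d-2}=\bigvee S^n$ shifts this by $\Sigma^n$ and preserves the property, so $V$ itself is a wedge of spheres; James applied to $\Omega V$ shows $\Sigma\Omega V$ is a wedge of spheres; and a final product splitting of $\Sigma(\Omega(S^n\times S^n)\times\Omega V)$ completes the claim. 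From here each summand $\Sigma^{(2n-2)\nu+1}\Omega N=\Sigma^{(2n-2)\nu}\Sigma\Omega N$ of $W=\bigvee_\nu\Sigma^{(2n-2)\nu+1}\Omega N$ is a wedge of spheres, so $W$ itself is a wedge of simply-connected spheres, $\Sigma\Omega W$ splits as a wedge of spheres by James, and one more product splitting applied to $\Omega N\times\Omega W$ finishes the proof of~(i).

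Part~(ii) then follows immediately: since $\Sigma F$ is a wedge of simply-connected spheres so are $\Sigma^2 F$ and $\Sigma^4 F$, and the Hilton--Milnor theorem decomposes $\Omega\Sigma^2 F$ and $\Omega\Sigma^4 F$ as weak products of loops on simply-connected spheres. Combined with Hilton--Milnor applied to $\Omega J_d\simeq\Omega\bigvee_d S^n$, the factorisations from the first half of the proposition become products of $S^1$ or $S^3$ with loops on simply-connected spheres. The main obstacle is the bookkeeping in part~(i): one must start from the Beben--Theriault decomposition of $\Omega N$ rather than trying to split $N$ directly, and then chain James with product splittings in the correct order so that each intermediate object is a genuine wedge of spheres rather than merely a loop-suspension of one.
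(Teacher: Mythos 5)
Your proposal is correct and follows essentially the same route as the paper's own proof: identify $N_0\simeq J_d$, invoke Beben--Theriault for the right homotopy inverse (valid precisely because the stated hypotheses exclude $n\in\{2,4,8\}$), feed this into Proposition~\ref{ftype} to determine $F$, substitute into Theorem~\ref{stabledecthm}, and then establish parts~(i) and~(ii) by chaining the product splitting $\Sigma(X\times Y)\simeq\Sigma X\vee\Sigma Y\vee\Sigma(X\wedge Y)$, the James splitting, and Hilton--Milnor. The only difference is presentational: you spell out the bookkeeping for $\Sigma\Omega N$ and $\Sigma\Omega W$ in more detail than the paper's terse ``it can be shown,'' but the underlying argument is identical.
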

\begin{proof}
Let us only prove the proposition after localization away from $\mathcal{P}_{k}$, while the integral statement can be shown with the same mild modification as in the proof of Theorem \ref{stabledecthm}.
As in the proof of Theorem \ref{stabledecthm}, let $\mathbb{F}=\mathbb{C}$ or $\mathbb{H}$, and take $k=2$, $4$ correspondingly. Then $2n=km$ for $m=1$ or $2$. By Theorem \ref{stabledecthm}, after localization away from $\mathcal{P}_{k}$ there is a homotopy equivalence  
\[
\Omega (N\# \mathbb{F}P^m)\simeq S^{k-1}\times \Omega N_0\times\Omega\Sigma^{k} F,
\]
where $F$ is the homotopy fibre of the inclusion
   \(i: \namedright{S^{2n-1}}{}{N_0}\) of the boundary. Since $N$ is a closed $(n-1)$-connected $2n$-dimensional manifold, there is a homotopy cofibration 
\[\nameddright{S^{2n-1}}{g}{\mathop{\bigvee}\limits_{i=1}^{d} S^{n}}{h}{N}.\]  
Therefore, in this case, $N_0\simeq\mathop{\bigvee}\limits_{i=1}^{d} S^{n}$, that is, $N_{0}\simeq J_{d}$, and $F$ is the homotopy   
fibre of the map $g$. If $d\geq 2$ and $n\notin\{2,4,8\}$ then, using distinct methods, 
in~\cite{BT1,BT2} it was shown that $\Omega h$ has a right homotopy inverse. Therefore 
in these cases Proposition~\ref{ftype} implies that the homotopy fibre of $g$ is 
$\Omega N\times \Omega (\mathop{\bigvee}\limits_{\nu=1}^{\infty} \Sigma^{(2n-2)\nu+1} \Omega N )$. 
We then obtain a homotopy equivalence after localization away from $\mathcal{P}_k$
\[
\Omega (N\# \mathbb{F}P^m)\simeq S^{k-1}\times \Omega J_d\times 
      \Omega\Sigma^{k}(\Omega N\times \Omega (\bigvee_{\nu=1}^{\infty} \Sigma^{(2n-2)\nu+1} \Omega N )).
\] 

Further, it was shown in~\cite{BT1,BT2} that there is a homotopy equivalence 
\[\Omega N\simeq\Omega (S^{n}\times S^{n})\times 
                     \Omega\left(J_{d-2}\vee (J_{d-2}\wedge \Omega (S^{n}\times S^{n}))\right).\] 
This implies that $\Omega N$ is homotopy equivalent to a product of loops on simply-connected spheres. Using the fact that $\Sigma (X\times Y)\simeq \Sigma X\vee \Sigma Y\vee \Sigma(X\wedge Y)$ and the James suspension splitting $\Sigma\Omega\Sigma B\simeq\bigvee_{k=1}^{\infty}\Sigma B^{\wedge k}$, it can be shown that $\Sigma\Omega N$ is homotopy equivalent to a wedge of simply-connected spheres, and hence so is $\mathop{\bigvee}\limits_{\nu=1}^{\infty} \Sigma^{(2n-2)\nu+1} \Omega N$. The Hilton-Milnor Theorem then implies that $\Omega J_d$ and $\Omega (\bigvee_{\nu=1}^{\infty} \Sigma^{(2n-2)\nu+1} \Omega N )$ are homotopy equivalent to products
of loops on simply-connected spheres, and therefore so is $\Omega N\times \Omega (\bigvee_{\nu=1}^{\infty} \Sigma^{(2n-2)\nu+1} \Omega N )$.  
Arguing again as we just have shows that part~(i) holds, and therefore $\Omega\Sigma^{k}(\Omega N\times \Omega (\bigvee_{\nu=1}^{\infty} \Sigma^{(2n-2)\nu+1} \Omega N ))$ is homotopy equivalent to a product of loops on simply-connected spheres, from which part~(ii) follows.
\end{proof} 

\begin{remark} 
Note that the decomposition of $\Omega N$ in Proposition~\ref{n-12nmanifoldprop} implies that its 
homotopy type only depends on $n$ and $d$, and hence the homotopy types of 
$\Omega(N\conn\mathbb{C}P^{n})$ and $\Omega(N\conn\mathbb{H}P^{\frac{n}{2}})$ depend 
only on $n$ and $d$. 
\end{remark}


\bibliographystyle{amsalpha}

\end{document}